\newtheorem{theorem}{Theorem}
\newtheorem{assumption}[theorem]{Assumption}
\newtheorem{remark}[theorem]{Remark}
\newtheorem{corollary}[theorem]{Corollary}
\newtheorem{lemma}[theorem]{Lemma}
\newcommand{\inComplete}[1]{}
\title{The Effects of Transmission-Rights Pricing \\ on Multi-Stage Electricity Markets}
\author{Erwann de Belloy de Saint-Liénard, Jakub Marecek, Vyacheslav Kungurtsev}
\begin{document}

\maketitle

\begin{abstract}
Cross-border transmission infrastructure is pivotal in balancing modern power systems, but requires fair allocation of cross-border transmission capacity, possibly via fair pricing thereof. This requirement can be implemented using multi-stage market mechanisms for Physical Transmission Rights (PTRs).
We analyse the related dynamics, and show prisoner's dilemma arises. 
Understanding these dynamics enables the development of novel market-settlement mechanisms to enhance market efficiency and incentivize renewable energy use.
\end{abstract}

%\tableofcontents
%\bigskip

%https://scholar.google.es/citations?user=fTDEJCwAAAAJ&hl=es

\section*{Introduction}

%In Europe, the electricity market has become a deregulated and integrated system with the goal of fostering competition and reducing monopolies.
%\inComplete{
%To this aim, several organizations and regulatory bodies has been created such as European Network of Transmission System Operators for Electricity (ENTSO-E), the Joint Allocation Office (JAO), Agency for the Cooperation of Energy Regulators (ACER), and the European Commission.}
%\cite{RulesPTRs}

Throughout much of the world, electricity markets have undergone the unbundling \cite{Wilson2002} of generation, transmission, and distribution.
Multi-stage markets (also known as multi-stage settlement systems or decentralized designs) have been introduced \cite{meeus2020evolution} to offer opportunities for risk hedging and executing recourse actions.  % AHLQVIST2022100812
At the same time, unbundled markets in various regions are increasingly being coupled,
such as within the single internal European market for electricity \cite{IME,Chapter23Energylaw}.
Market coupling offers the prospect of reducing prices and their volatility \cite{LAM201880},
by
promoting competition, increasing liquidity, and improving integration of renewable electricity production,
but requires the harmonization of electricity exchange systems, systems for the allocation of cross-border transmission capacity \cite{congestionreview},
and ancillary services.
In coupled (multi-area \cite{10379146}) markets, buyers and sellers of electricity and ancillary services
make use of physical cross-border transmission capacity \cite{congestionreview}, 
limited by constraints on grid stability.
Still, there may be only a few large electricity producers in each zone of the coupled market,
and the cross-border transmission capacity is often only a small fraction of the peak demand in the adjacent zones. 
This makes the mathematical modeling of the effects of cross-border transmission on electricity markets
important and rather challenging. 

Let us exemplify the developments on the current situation in Europe, within the so-called capacity allocation and congestion management (CACM, \cite{CACM}) 
model. 
There are a number of market-coupling systems operational. 
In day-ahead markets, 
there are 
Price Coupling of Regions (PCR) and %(2015
Flow-Based Market Coupling (FBMC, \cite{KRISTIANSEN2020100444,CORONA20221768,Felling2023}) % 2022
utilized within Single Day-Ahead Coupling (SDAC).
In intraday markets,
Cross-border Intraday Coupling (XBID) % 2018 
has been used within Single Intraday Coupling (SIDC)  % 2018
since 2018. 
Constraints on the interconnectors to ensure grid stability are being computed using the Pan-European Hybrid Electricity Market Integration Algorithm (EUPHEMIA, \cite{6861275})
for almost a decade.
Yet, the utilisation of some of the interconnectors, such as on the Hungarian border \cite[p. 3]{EFET}, is as low as 20\%.
Harmonization of balancing markets is proving more challenging still: markets for
primary reserves (frequency containment reserve, FCR) have been fully coupled within the continental Europe synchronous area, 
while the coupling of markets for secondary reserves (automatic Frequency Restoration Reserve, aFRR, \cite{BACKER2023107124})
has been deployed only across selected markets in central Europe so far.
In most smaller countries, it has not proven efficient to break down the former
monopolistic electricity producers into more than two or three market participants. 

Motivated by these developments, we study the dynamics of a collection of interacting duopolies,
and the effects of the cross-border capacity and its pricing on the behavior of market participants in Nash equilibrium.
This builds on the work of Joskow and Tirole \cite{Tirole2000}, who studied the influence of the cross-border capacity market on the day-ahead and intraday markets regarding the market power held by market participants, with a particular focus on how transmission rights can reinforce the market power held by participants. 
%They showed that implementing a market for Physical Transmission Rights (PTRs) or Financial Transmission Rights (FTRs) increases existing market power of the buyer or the seller in an unregulated market.
This also builds upon the earlier work of Allaz and Villa \cite{cournot},
who consider Cournot-competition models of how day-ahead trading affects producers' and consumers' welfare,
advocating the use of multi-stage markets.
We present:
\begin{itemize}
\item result on Cournot competition and the day-ahead market, where the generators do not have the same marginal cost of production,
\item results on Cournot competition in a model with two duopolies interacting with each other, discussing why in some settlement mechanisms, offering lower prices than the spot market may maximize social welfare.
\item an analysis of the allocation of cross-border transmission capacity in both the primary and secondary markets.
\end{itemize}
%Although our models improve upon the realism of well-known models in Economic Theory \cite{Tirole2000,cournot}, %we still focus on proving results analytically,
%in contrast to empirical studies \cite[e.g.]{GOMEZ2016185,CORONA20221768,EFET,BACKER2023107124} of the effects of cross-border transmission.
Our results illuminate the pivotal role of cross-border transmission and demand forecasts in electricity-market regulation and highlight the potential benefits and challenges of implementing specific pricing policies and market designs to foster renewable energies and enhance overall social welfare.

%we aim to contribute to the ongoing efforts in shaping a more efficient and equitable electricity market in Europe,
%and especially the understanding thereof. 

\section{Cournot competition in a two-stages market with asymmetric duopolies}
\subsection{Introduction to the model}
The presence of day-ahead markets has become increasingly important in shaping strategic decisions within economic markets. 
Allaz and Villa \cite{cournot} investigate how day-ahead trading affects producer and consumer welfare.
Their model involves a Cournot duopoly with symmetric producers (they have the same marginal cost of production) 
who engage in day-ahead market transactions $N$ periods before production in the spot market.
The study explores the impact of the number of day-ahead trading periods, $N$, on market outcomes. 
As $N$ approaches infinity, indicating extensive day-ahead trading, the equilibrium outcome tends towards a competitive solution. 
Social welfare is increased as $N$ tends to infinity, which advocates for the use of multi-stage markets.

Here, we start by extending the model of Allaz and Villa \cite{cournot} to a two-stage market where generators do not have the same cost function and prices are elastic.
In the model, we have two generators (indexed by 1 and 2) and the following variables: %define the market quantities:
\begin{itemize}
\item[$\bullet$] $x_1$,$x_2$: the production of generator $i = 1, 2$ 
\item[$\bullet$] $f_1$,$f_2$: their sales in the day-ahead market.
\item[$\bullet$] p: the price of electricity in the day-ahead market
\item[$\bullet$] $c_1(x) = \alpha_1 x$, $c_2(x)= \alpha_2 x$: the cost functions
\item[$\bullet$] $q(z)=D - e z$: the inverse demand function (i.e. the market price of electricity). We assume that $D$ is greater than the marginal costs $\alpha_i$, and the coefficient $e$ represents the demand elasticity of the market
\item[$\bullet$] $p(z)= D^{day-ahead}- ez$: the inverse demand function in the day-ahead market (i.e. the market price of electricity in the day-ahead market).
\end{itemize}
 Throughout, we solve the equilibrium system analytically using  backward induction.
 %\inComplete{, starting from the end where each player finds the best strategy given its impact on the next rounds. By this method, we first solve the spot market assuming that we already know the values of prices and quantities in the day-ahead market. Then, given the results of the spot market (which will depend on the variables from the day-ahead market), we compute the optimized values of the day-ahead market.}

\subsection{Generation game}
\label{B-Generation game}
We denote the profit of the generators in the spot market by $U^R_1(x_1;f_1,f_2, x_2)$ (respectively $U^R_2(x_2;f_1,f_2, x_1)$) for generator 1 (2). Given that generator 1 has sold $f_1$ (respectively $f_2$) in the day-ahead market, it can only sell $x_1 - f_1$ (respectively $x_2 - f_2$) in the spot market at the price $q(x_1 + x_2)$, and it will have to produce $x_1$:
\newline
\begin{align*}
        U^R_1(x_1;f_1,f_2, x_2) &=  q(x_1 + x_2)(x_1 - f_1) - \alpha_1 x_1 \\ & = (D - e(x_1 + x_2))(x_1 - f_1) - \alpha_1 x_1 \\
        U^R_2(x_2;f_1,f_2, x_1) &=q(x_1 + x_2)(x_2 - f_2) - \alpha_2 x_2 \\&= (D - e(x_1 + x_2))(x_2 - f_2) - \alpha_2 x_2 
        \end{align*}
\newline We can see that $U^R_1$ is strongly concave with respect to $x_1$, and similarly $U^R_2$ with respect to $x_2$, implying that any stationary point is a local maximum. 

Consider that each generator maximizes their corresponding profit function with respect to the amount of produced energy. A Nash equilibrium corresponds to a set of quantities $x^*_1,x^*_2$ such that $x^*_1$ maximizes $U^R_1$ given $x_2^*$ and $x^*_2$ maximizes $U^R_2$ given $x^*_1$. The gradient of a strongly concave function is strongly monotone, and so there exists a $\gamma>0$ sufficiently small such that $I-\gamma\begin{pmatrix} \nabla_{x_1} U^R_1(\cdot;f_1,f_2,\cdot) \\
\nabla_{x_2} U^R_2(\cdot;f_1,f_2,\cdot) \end{pmatrix}$ is contractive, and thus a Nash equilibrium exists by the Banach-Picard fixed point theorem. 

Computing the partial derivatives with respect to $\{x_i\}_{i \in \{1,2\}}$ setting them equal to zero, and rearranging the expression, we get the response functions $x_1(x_2)$ and $x_2(x_1)$. Then, we compute the Nash equilibrium of this game (i.e. solve the system of equations given by these two response functions). It gives us:
\begin{equation*}
   x_1(f_1,f_2) =\frac{\frac{1}{e}(D - 2\alpha_1 + \alpha_2) + 2f_1 - f_2}{3} \\
\end{equation*}
\begin{equation}
\label{eq:a}
x_2(f_1,f_2) =\frac{\frac{1}{e}(D - 2\alpha_2 + \alpha_1) + 2f_2 - f_1}{3} \\
\end{equation}
\begin{equation*}
q(f_1,f_2) =\frac{\frac{1}{e}(D + \alpha_1 + \alpha_2) - f_1 - f_2}{3} 
\end{equation*}
\begin{proof}
    see Appendix \ref{app: 1-B}
\end{proof}
Given this results, we can compute the values of $U_1^R$ and $U_2^R$ given $f_1$ and $f_2$.
\newline Here, we can see that the quantities sold in the spot market ($x_1(f_1,f_2) - f_1$ and $x_2(f_1,f_2) - f_2$) only depend on the overall day-ahead sales and not on what each generator has already sold. As we expected, the more is sold on the day-ahead market, the less is sold in the spot market.

\subsection{Day-ahead market equilibrium}

Now we can compute the day-ahead market equilibrium, we have to maximize the profit in the day-ahead market which is $U^D_1(f_1;f_2)$ (respectively $U^D_2(f_2;f_1)$) for generator 1 (2). In this setting, ones also considers electricity sold in the day-ahead market at price $p(f_1,f_2)$:
\newline
 \begin{align*} &U^D_1(f_1;f_2) =U^R_1(f_1;f_2) + p(f_1,f_2)f_1\\ &= (q(f_1,f_2) - \alpha_1 )x_1(f_1,f_2)  + (p(f_1,f_2) - q(f_1,f_2))f_1 \\ &U^D_2(f_2;f_1) =U^R_2(f_2;f_1) + p(f_1,f_2)f_2\\ &= (q(f_1,f_2)- \alpha_2)x_2(f_1,f_2)+ (p(f_1,f_2) - q(f_1,f_2))f_2 \end{align*}

Here the terms $(p(f_1,f_2) - q(f_1,f_2))f_1$ and $(p(f_1,f_2) - q(f_1,f_2))f_2$ are the arbitrageurs profits. It is the additional profit that arbitrageurs can make by trading in the day-ahead market instead of the spot market. To simplify the resolution, we will add the following assumption.

\begin{assumption}[No-arbitrage Property]
\label{Assumption1:"No-arbitrageProperty"}
\begin{center}
    $p(f_1,f_2) = q(f_1,f_2) \Leftrightarrow D = D^{day-ahead}$ 
\end{center}
\end{assumption}

This property states that market participants have perfect foresight of the future and one cannot make any profit by deliberately withholding supply in the spot market to trade in the day-ahead market. Note that in practice this property is enforced by regulation authorities.
\newline As a consequence we can simplify the day-ahead profit: 
\begin{align*}
    & U^D_1(f_1;f_2) = (q(f_1,f_2) - \alpha_1)x_1(f_1,f_2)\\
    & U^D_2(f_2;f_1) = (q(f_1,f_2) - \alpha_1)x_2(f_1,f_2)
\end{align*}
\newline We obtain expressions for response functions $f_1(f_2)$ and $f_2(f_1)$ when we substitute the expressions in \ref{eq:a} and solve for the maximum profit for each generator i, $ i \in \{1,2\} $ with respect to $f_i$:
\hfill \break
\newline $
\left\{
    \begin{array}{ll}
        \dfrac{\partial U^D_1(f_1;f_2)}{\partial f_1}=0 \Leftrightarrow f_1 = \frac{1}{4}(\frac{1}{e}(D - 2 \alpha_1 + \alpha_2) - f_2)\\
        \dfrac{\partial U^D_2(f_2;f_1)}{\partial f_2}=0 \Leftrightarrow f_2 = \frac{1}{4}(\frac{1}{e}(D - 2 \alpha_2 + \alpha_1) - f_1)
    \end{array}
\right.
$
\hfill \break
\newline We can then compute the Nash equilibrium of this system which gives us:
\begin{equation*}
    f_1=\frac{D - 3\alpha_1 + 2\alpha_2 }{5e} \,
\qquad
f_2=\frac{D - 3\alpha_2 + 2\alpha_1 }{5e} \\
\end{equation*}
\begin{equation}
\label{eq:b}
x_1=\frac{2(D - 3\alpha_1 + 2\alpha_2 )}{5e} \,
\qquad
x_2 =\frac{2(D - 3\alpha_2 + 2\alpha_1 )}{5e} \\
\end{equation}
\begin{equation*}
q=\frac{D +2( \alpha_1 + \alpha_2)}{5e} 
\end{equation*}

  If we take $\alpha_1 = \alpha_2$, then we get the same results as \cite{cournot}. 
In this model, we introduced an elasticity coefficient, and interestingly, we observed that its value had no impact on the distribution of sales between the day-ahead and spot markets. However, the introduction of the elasticity coefficient presents one novel observation relative to \cite{cournot}. By considering the expression for $q$ we quickly see that  an inelastic market leads to significant price increases. Recognizing this potential issue, Europe has implemented a price cap mechanism to prevent soaring electricity prices, considering that European energy demand predominantly exhibits inelasticity.
In Europe, the long-run price elasticity is estimated between 0.53 and 0.56 \cite{elasticity}, so we can say that the electricity market is relatively inelastic.
\newline The marginal cost of production varies greatly depending on the type of energy. It is close to zero for renewable energies such as wind power\inComplete{(in this case, we would be more inclined to consider an opportunity cost)}, whereas it is much higher for thermal power plants, for example.

\section{Model 1: A two-stage model with two markets which are duopolies}
\label{Model 1: A two-stage model with two markets which are duopolies}
\subsection{Description of the model}
Considering a number of smaller electricity markets are dominated by duopolies, %\inComplete{(e.g. in France: Electricité de France (EDF) and ENGIE)}, 
we will model all countries as duopolies. Sales between countries are possible thanks to cross-border transmission capacities.
%All the infrastructure and the allocation process which give the right to a generator from a country A to export a given amount of electricity to country B.
Therefore, to understand the management of the transmission infrastructure and the allocation of transmission capacity between countries, we will model transmission lines between two countries as four capacities dispatched to each of the generators in the duopolies.
 The maximum amount of electricity that can be sold on the foreign market is limited for each generator by its transmission capacity $\{K_i\}_{i \in I}$. We also add a congestion cost $\eta $ which is an additional cost that generators have to pay to export their electricity to a given market.
\newline On the demand side, we will use elastic demand with a price cap that depends on the market situation.
  In this first model, we want to study the behavior of a multi-stage electricity market, so we will start with a two-stage market with a spot market and a day-ahead market. The spot market is the market that operates just before the actual delivery of electricity.
The day-ahead market operates 24 hours before the spot market, based on a forecast of demand on the spot market.
  We will represent the spot market as a set of scenarios S and their probability $p_s$ which represent all the possible values of the maximum price $D_s$ in the spot market. The idea behind that is that we will be able to optimize the production in the day-ahead market regarding all the possible values of the demand in the spot market weighted by their probability, targeting the expected profit given the uncertainty.

   Figure~\ref{fig:explo_c} presents a diagram of the model with two duopolies. The structure of the market will remain the same, while the details are discussed next.
\begin{figure}[tb]
    \centering
    \includegraphics[width = 0.45\textwidth]{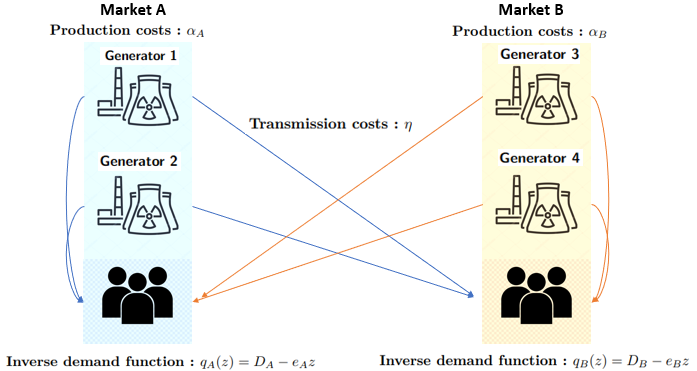}
    \caption{A schematic illustration of two markets with a duopoly of two generators in each market.}
    \label{fig:explo_c}
\end{figure}
%\FloatBarrier

 Table~\ref{tab:notations} lists variables appearing in the market settlement model for the four generators across markets A and B:
 \begin{table}
\begin{tabular}{ | m{3cm} | m{5cm}| } 
  \toprule
  \textbf{Symbol} & \textbf{Definition}  \\ [0.5ex] 
 \midrule
 $s \in S$ & The scenario $s$ that can occur on the spot market, and their set $S$    \\ 
 $I = [\![1;4]\!]$ & The set of indices of generators   \\ 
 $q_A^s(z) = D_A^s - e_A z$ & The inverse demand function for market A on the spot market in scenario s \\ 
  $q_A(z) = D_A^{SO} - e_A z$ & The inverse demand function for market A on the day-ahead market\\ 
 $\forall i \in [\![1;4]\!],x_i$ & The overall production of each generator  \\
 $\alpha_A$ (respectively $\alpha_B$)  & The marginal cost of production of generators from market A (respectively B)  \\
 $\eta$ & The marginal transmission cost   \\
 $f_i$ (respectively $g_i$) & The day-ahead sales in the market A (respectively B) for each generator  \\
$ y_i^s$ (respectively $z_i^s$) & Their sales in the spot market A (respectively B) in the scenario s \\ 
 $y_i = \sum_s p_s y_i^s$ ( $z_i$) & Their expected sales in the spot market A (respectively B) \\ 
  $ K_i$ & The transmission capacity of each generator \\
  $D_A = \sum_{s \in S}p_s D_A^s$ & The average value of $D_A^s$ in market A \\ 
 $\beta_A^s = D_A^{SO} - D_A^s$ & The difference between prices in the day-ahead and in the spot market in scenario s\\ 
 $\beta_A = D_A^{SO} - D_A$ & The average difference between prices in the day-ahead and in the spot market\\ 
 \bottomrule
\end{tabular}
 \caption{A table of notation utilized in the market settlement model for the four generators across markets A and B.}
 \label{tab:notations}
\end{table}
 We assume that the available transmission capacity $K$ between market A and B has been dispatched prior to the day-ahead market. This means that when the day-ahead (and therefore the spot) market is settled, generators already know their transmission capacity.

In this first model, we will see how to choose the value of $D_A^{SO}$ on the day-ahead market to optimize social welfare.
  There, we will start by studying the quantities sold on the day-ahead and spot markets given this settlement of the market and then see how one can optimize the value of $D_A^{SO}$. 

\begin{remark}
    The price (inverse demand function) in the day-ahead market is the difference between the maximum price $D_A^{SO}$ and the realized production (in both stages) while the price in the spot market is the difference between the actual maximum price $D_A^s$ that consumers are willing to pay and the realized production.
\end{remark}
We note that market A and B are independent (producers do not need to arbitrage between these two markets since they can produce an infinite quantity of electricity). By symmetry, we can clear only one market and extend the results to the other. Consequently, we will only focus on market A as we will have symmetric results in market B.
\inComplete{We mention that this will not be the case if we choose to limit the maximum amount of electricity produced by each generator, as these constraints will bind the amount produced in each market together, but this is outside the scope of this model.}
The solution of this and other models will follow the same idea, namely "backward induction". We will start by clearing the spot market based on the results of the day-ahead market, then clear the day-ahead market. This method is taken from \cite{cournot}.
\subsection{Resolution of the model}
\subsubsection*{1) Generation game}
Each generator wants to maximize its profit from their sales in the spot market (i.e. $y_j^s$). Here we maximize the difference of the revenue, with price $q_A^s(z)$ with $z= \sum_{i=1}^4 y_i^s + f_i $, the overall sales in market A. The cost of production is $\alpha_j$ and cost of transmission $\eta$ if the quantity is produced in a foreign market. In all, for generator $j \in [\![1;4]\!]$:
\begin{align*}
    & \forall j \in \{1,2\},\forall s \in S,\\
    & U^{Rs}_j(\{f_i,y_i^s\}_{i \in I}, \eta)=q_A^s[\sum_{i=1}^4 y_i^s + f_i] y_j^s- c_j(y_j^s+f_j)\\
    & = y_j^s (D_A^s - e_A \sum_{i = 1}^4(y_i^s + f_i))- \alpha_A (y_j^s + f_j)\\
    & \forall j \in \{3,4\},\forall s \in S,\\
    &U^{Rs}_j(\{f_i,y_i^s\}_{i \in I}, \eta)= q_A^s[\sum_{i=1}^4 y_i^s + f_i] y_j^s - (c_j+ \eta)(y_j^s+f_j) \\ & = y_j^s (D_A^s - e_A \sum_{i = 1}^4(y_i^s + f_i))- (\alpha_A + \eta) (y_j^s + f_j)
\end{align*}

In the model, we only have constraints on how much can be exported by each generator (i.e. generators who export from market B to market A and from market A to market B). As a consequence, when studying price and quantities sold to consumers from market A by each generator, only values regarding constraints on exports from market B to market A will appear (i.e. generators 3 and 4). Constraints on generators from market A who export to market B will only appear when we optimize the sales in market B (while constraints on generator 3 and 4 would not appear). The transmission constraints regarding sales in the spot market A are :
\setlength{\abovedisplayskip}{10pt}
\setlength{\belowdisplayskip}{10pt}
\begin{align*}
    &\forall s \in S, F^{Rs}_3(f_3,y_3^s) = f_3 + y_3^s - K_3 \le 0\\
    &\forall s \in S, F^{Rs}_4(f_4,y_4^s) = f_4 + y_4^s - K_4 \le 0
\end{align*}
\label{lagrange}
The Lagrangian of the problem is, in the spot market A (with $\forall i \in \{3,4\},\lambda_j^{0s} \ge 0$ the Lagrange multipliers associated with the constraint $F_j^{Rs}(z)$ where $z=\{f_i, y_i^s\}_{i \in I}$) :
\begin{align*}
    &\forall s \in S,\forall j \in \{1,2\},\\
    &L_j^{Rs}(\{f_i,y_i^s\}_{i \in I},\eta) = -U^{Rs}_j(z, \eta),\\
    &\forall s \in S,\forall j \in \{3,4\},\\
    &L_j^{Rs}(\{f_i, y_i^s\}_{i \in I},\eta,\lambda^{0s}_j) = -U^{Rs}_j(z, \eta)  + \lambda^{0s}_j F^{Rs}_j(z)
\end{align*}
The transmission constriants are linear. Consider that we shall enforce, in the first stage problem, the constraint $y_3^*\le K_3$. In this case, either $y_3^*=K_3$, and $f_3= 0$ is necessary for feasibility and the problem reduces to being only in terms of $f_4$, or otherwise the constraint can be a strictly feasible, i.e., Slater's constraint qualification holds. With a strongly convex objective, we can conclude that the KKT optimality conditions are necessary for a local minimizer. 

More specifically, we know that $\forall j \in I,\forall s \in S,$ a solution ($y_j^s,\lambda_j^{0s}$) is a saddle point of the Lagrangian $L_j^{Rs}(y_j^s,\lambda_j^{0s})$.
\newline So, we have to minimize these equations in $\{y_j^s\}_{j \in I}$, we solve these equations and get the response functions.
\newline $\forall j \in I,\forall s \in S, x_j^{As}(x^{As}_{i \ne j})$:
\setlength{\abovedisplayskip}{2pt}
\setlength{\belowdisplayskip}{1pt}
\begin{align*}
    &\dfrac{\partial L^{Rs}_j(\{f_i,y_i^s\}_{i \in I},\eta,\lambda^{0s}_j)}{\partial y_j^s} = 0   
\Leftrightarrow \\
    &\left\{
    \begin{array}{ll}
        \forall j \in\{1,2\}, x_j^{As} + \frac{1}{2}\sum_{i \ne j} x_i^{As} = \frac{D_A^s - \alpha_A + e_A f_j}{2 e_A}\\
        \forall j \in\{3,4\}, x_j^{As} + \frac{1}{2}\sum_{i \ne j} x_i^{As} = \frac{D_A^s - \alpha_B - \eta - \lambda^{0s}_j + e_A f_j }{2 e_A}
    \end{array}
\right.
\end{align*}
\begin{proof}
See Appendix \ref{app: 2-B}
\end{proof}
The combination of KKT conditions present a Linear Complementarity Problem (LCP). From standard fixed point arguments based on Kakutani's fixed point theorem, an LCP with a compact feasible region has a solution (see, e.g.~\cite{facchinei2003finite}).

Then, we compute the Nash equilibrium for these equations, by finding the solution of this matrix system $AX_A^s=B_x^s$ with:
\begin{align*}
A &= \begin{bmatrix}
    1 & \frac{1}{2} & \frac{1}{2} & \frac{1}{2} \\
    \frac{1}{2} & 1 & \frac{1}{2} & \frac{1}{2} \\
    \frac{1}{2} & \frac{1}{2} & 1 & \frac{1}{2} \\
    \frac{1}{2} & \frac{1}{2} & \frac{1}{2} & 1 \\
\end{bmatrix},
X_A^s = \begin{bmatrix}
    x_1^{As} \\
    x_2^{As} \\
    x_3^{As} \\
    x_4^{As} \\
\end{bmatrix}, \\
B_x^s &= \frac{1}{2e_A}
\begin{bmatrix}
    D_A^s + e_A f_1 - \alpha_A \\
    D_A^s + e_A f_2 - \alpha_A \\
    D_A^s + e_A f_3 - (\alpha_B + \eta + \lambda^{0s}_3) \\
    D_A^s + e_A f_4 - (\alpha_B + \eta + \lambda^{0s}_4) \\
\end{bmatrix}
\end{align*}
\newline Then, we have the solutions for sales in the spot market A: 
\setlength{\abovedisplayskip}{10pt}
\setlength{\belowdisplayskip}{10pt}
\begin{align*}
    &\forall s \in S,q_A^s(\{f_i\}_{i \in I}) : = C_A^s - \frac{e_A}{5}\sum_{k=1}^4 f_k \\
    & = \frac{1}{5}(D_A^s + 2 (\alpha_A + \alpha_B + \eta)+ \lambda^{0s}_3 + \lambda^{0s}_4 - e_A\sum_{k=1}^4 f_k) \\
    & \forall s \in S,\forall j \in \{1,2\},  y_j^s : = r_j^s - \frac{1}{5}\sum_{k=1}^4 f_k \\&
    = \frac{1}{5e_A}(D_A^s - 3 \alpha_A + 2(\eta + \alpha_B) + \lambda^{0s}_3 + \lambda^{0s}_4 - e_A \sum_{k=1}^4 f_k )\\ &
    \forall s \in S,\forall j \in \{3,4\},
    y_j^s : = r_j^s - \frac{1}{5}\sum_{k=1}^4 f_k \\&
    =  \frac{1}{5e_A}(D_A^s - 3 (\eta + \alpha_B) + 2 \alpha_A  - 4  \lambda^{0s}_j + \lambda^{0s}_{-j}  - e_A\sum_{k=1}^4 f_k )\\
\end{align*}
\inComplete{
We have symmetric results for market B (generators who export their production are now generators 1 and 2):
\newline $\forall s \in S,\forall j \in \{1,2\},$
\newline $\begin{aligned}[t]
    z_j^s &= \frac{1}{5e_B}( D_B^s - 3(\eta + \alpha_A) + 2 \alpha_B- 4  \lambda^{0s}_j + \lambda^{0s}_{-j}  - e_B\sum_{k=1}^4 g_k) \\&: = s_j^s - \frac{1}{5}\sum_{k=1}^4 g_k
\end{aligned}\\$
\newline
$\forall s \in S,\forall j \in \{3,4\},$
\newline$
    \begin{aligned}[t]
    z_j^s& =  \frac{1}{5e_B}(D_B^s - 3 \alpha_B + 2 (\eta + \alpha_A)  + \lambda^{0s}_1 + \lambda^{0s}_2 - e_B\sum_{k=1}^4 g_k)\\& : = s_j^s - \frac{1}{5}\sum_{k=1}^4 g_k 
\end{aligned}$
    \newline $
    \forall s \in S,$
\newline $
\begin{aligned}[t]
        q_B^s(\{g_i\}_{i \in I}) &= \frac{1}{5}(D_B^s + 2 (\alpha_A + \alpha_B + \eta) + \lambda^{0s}_1 + \lambda^{0s}_2  - e_B\sum_{k=1}^4 g_k)\\ &: = C_B^s - \frac{e_B}{5}\sum_{k=1}^4 g_k 
\end{aligned}  $}
We can see that the greater the quantity sold on the day-ahead market is, the less producers will sell on the spot market.
Then, the situation for producers regarding prices in the spot market is as if they have not sold anything in the day-ahead market and that the price cap is just $D_A^s - e_A\sum_{k=1}^4 f_k$.
\subsection*{2) Day-ahead market}
\label{arbitrage}
At time 1 (day-ahead market), a selected demand function based on a market-demand forecast will be utilized.
\newline The underlying analysis is the idea that $D_A^s$ represents demand, and the higher $D_A^s$, the higher the demand.
\newline The generation game will not be modified as generators will observe a scenario $s \in S$ for the demand function and have the day-ahead sales as an input.
 The $D_A^{SO}$ term is chosen to optimize social welfare. 
 %It is the chosen inverse demand function that is offered to generators in the day-ahead market.
That is: one chooses $D_A^{SO}$, such that it maximizes the sum of the consumer and producer surpluses:
\begin{align*}
    & z(\beta_A)=  \sum_{s\in S} p_s [ \int_{0}^{x_A^s} D_A^s - e_A xdx] - \alpha_A x_{AA} \\ & - (\alpha_B + \eta) x_{BA}- \beta_A \sum_{i \in I}f_i] 
\end{align*}

%The SO will have to pay (or make money) if the arbitrageur's profit is positive (negative). This is why we add 
Notice that the term $- \beta_A \sum_{i \in I}f_i $. This is the additional cost or profit of the arbitrageurs profit.

\begin{figure*}
    \centering
    \begin{theorem}[Optimal profit of arbitrageurs] \label{optimalprofit}
The arbitrageurs profit chosen to optimize social welfare $\beta_A$ is:
\begin{equation}
\frac{D_A(e_A -12) + (\alpha_A + \alpha_B + \eta)(8e_a - 4) + (\lambda_3^0 + \lambda_4^0)(4e_A +3)+ (\lambda_3^1 + \lambda_4^1)(\frac{3}{5}e_A+3) }{4 e_A + 40}
\end{equation}
\end{theorem}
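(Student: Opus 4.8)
The plan is to proceed by the same backward induction used throughout, peeling the problem into three nested layers. The innermost spot (generation) game has already been solved: for every scenario $s$ we have the equilibrium spot quantities $y_j^s = r_j^s - \tfrac{1}{5}\sum_k f_k$ and clearing price $q_A^s = C_A^s - \tfrac{e_A}{5}\sum_k f_k$, both affine in the aggregate day-ahead position $\sum_k f_k$. The middle layer is the day-ahead game, whose equilibrium I would solve to express $\{f_i\}$ — and in particular the aggregate $\sum_i f_i$ — as an affine function of $\beta_A$. The outer layer is the regulator's problem of choosing $\beta_A$ (equivalently $D_A^{SO} = D_A + \beta_A$) to maximise the welfare functional $z(\beta_A)$; since every dependence on $\beta_A$ turns out to be at most quadratic, this reduces to a single first-order condition.

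For the day-ahead layer I would write each generator's objective as its expected spot profit $\sum_s p_s U^{Rs}_j$ evaluated at the spot equilibrium, plus the forward revenue $p_A(\cdot)\,f_j$ with $p_A = D_A^{SO} - e_A(\cdot)$, exactly mirroring the construction $U^D_1 = U^R_1 + p\,f_1$ of the asymmetric-duopoly section. Differentiating in $f_j$ and setting each gradient to zero gives four linear stationarity conditions; solving this system (the analogue of the matrix system $A X_A^s = B_x^s$ of the generation game) yields $f_i$ as affine functions of the data, the only $\beta_A$-dependence entering through $D_A^{SO}$. Collecting terms, $\sum_i f_i$ becomes an affine map $\sum_i f_i = u + v\,\beta_A$ for explicit constants $u,v$ depending on $D_A$, on $\alpha_A + \alpha_B + \eta$, on $e_A$, and on the multipliers $\lambda_j^{0s}$.

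I would then substitute these equilibrium quantities into $z(\beta_A)$. The consumer-surplus integral evaluates to $D_A^s x_A^s - \tfrac{e_A}{2}(x_A^s)^2$ with total consumption $x_A^s = \sum_j r_j^s + \tfrac{1}{5}\sum_k f_k$ affine in $\beta_A$, so this term is quadratic in $\beta_A$; the production-cost terms $\alpha_A x_{AA} + (\alpha_B+\eta)x_{BA}$ are affine; and the arbitrageur transfer $-\beta_A \sum_i f_i = -\beta_A(u+v\beta_A)$ is again quadratic. Hence $z(\beta_A)$ is a concave quadratic, its leading coefficient carrying the $e_A$-dependence that produces the denominator $4e_A+40 = 4(e_A+10) > 0$, and the unique maximiser comes from $dz/d\beta_A = 0$. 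Taking the expectation $\sum_s p_s(\cdot)$ replaces $D_A^s$ by its mean $D_A$ and groups the scenario multipliers into the sums $\lambda_3^0 + \lambda_4^0$ and $\lambda_3^1 + \lambda_4^1$; rearranging the resulting linear equation in $\beta_A$ then gives the stated expression.

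The main obstacle I anticipate is bookkeeping rather than conceptual. Propagating the scenario-weighted quantities and the four Lagrange multipliers $\lambda_j^{0s}$ consistently from the spot equilibrium through the day-ahead linear system and into the derivative of $z$ is error-prone, and one must justify holding the multipliers fixed while differentiating in $\beta_A$ — that is, conditioning on a fixed active set of the transmission constraints $f_j + y_j^s \le K_j$, so that each $\lambda_j^{0s}$ enters purely as data. One also has to keep the averaging $D_A = \sum_s p_s D_A^s$ distinct from the pointwise caps $D_A^s$ throughout the integral term; handling that cleanly is what delivers the precise coefficients $(e_A-12)$, $(8e_A-4)$, $(4e_A+3)$ and $(\tfrac{3}{5}e_A+3)$ appearing in the numerator.
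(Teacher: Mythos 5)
Your proposal follows exactly the route the paper takes: backward induction through the generation game and the day-ahead game (whose equilibrium, computed in Appendix~\ref{app: 2-B day ahead market}, makes each $f_i$ and hence $\sum_i f_i$ affine in $\beta_A$ with slope $5/(17e_A)$ each), followed by the single first-order condition $\partial z/\partial\beta_A=0$ on the concave quadratic welfare function, with the multipliers $\lambda_j^{0s},\lambda_j^{1}$ held fixed as data and the scenario average replacing $D_A^s$ by $D_A$ — which is precisely the computation in Appendix~\ref{app: 2-B arbitrageurs profit}. The only difference is one of completeness rather than method: you stop at the plan and do not carry out the final substitution and rearrangement that yields the explicit coefficients $(e_A-12)$, $(8e_A-4)$, $(4e_A+3)$, $(\tfrac{3}{5}e_A+3)$ over $4e_A+40$.
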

    \label{fig:enter-label}
\end{figure*}

\begin{proof}
We compute the partial derivatives regarding $\beta_A$, see Appendix \ref{app: 2-B arbitrageurs profit} 
\end{proof}
The main difference between the various European markets is the elasticity of demand. Here, we can see the effect of the choice of $D_A^{SO}$.
\bigskip \newline \textbf{Case 1:} if the demand is almost inelastic, $e_A \to 0$, we have: 
\begin{equation*}
        D_A^{SO}= \frac{28 D_A -4 (\alpha_A + \alpha_B + \eta) +3 (\lambda_3^0 + \lambda_4^0+ \lambda_3^1 + \lambda_4^1) }{40}
\end{equation*}
\newline In this case, one can choose a price that is lower in the day-ahead market than in the spot market. 
\bigskip \newline \textbf{Case 2:} if the demand is very elastic $e_A \to +\infty$, we have $D_A^{SO}$: 
\begin{equation*}
         \frac{5D_A + 8(\alpha_A + \alpha_B + \eta) + 4(\lambda_3^0 + \lambda_4^0)+ \frac{3}{5}(\lambda_3^1 + \lambda_4^1)}{4}
\end{equation*}
\newline In this case, one chooses a price that is higher in the day-ahead market than in the spot market.
\bigskip \newline \textbf{Case 3:} if $e_A=1$, then $D_A^{SO}$ is:
\begin{equation*}
         \frac{33D_A + 4(\alpha_A + \alpha_B + \eta) +7( \lambda_3^0 + \lambda_4^0)+ \frac{18}{5}(\lambda_3^1 + \lambda_4^1) }{44}
\end{equation*}
\newline In this case, which market has a higher price will depend on the values of $D_A, \alpha_A, \alpha_B, \eta$.
  
  The main result that follows from this model without the ``no-arbitrage property'' is the ``prisoner's dilemma'' \cite{tucker1959contributions} that can arise between producers: A demand on the day-ahead market can be lower than the expected demand on the spot market, as this would lead to higher overall social welfare in some cases.
  At first, it might seem preferable for producers to cooperate and not enter the day-ahead market, as prices will be higher on average than on the day-ahead market. But if they do not enter the market and their competitors do, this will be very detrimental to them.
\newline In the end, they will prefer to secure their profit by entering in the day-ahead market, even if it would be better for them to cooperate.
\newline This results from Nash equilibrium strategies. A set of strategies which is a Nash equilibrium is an equilibrium where no producers have an interest to unilaterally deviate from their strategy.
\newline To illustrate this phenomenon, we will look at an example where the markets are almost inelastic and so the arbitrageurs' profit is negative ($\beta_A \le 0$).
If only generator 1 enters the day-ahead market, given the results above, we have the profit values for each generator, so let's concentrate on generators 1 and 2 and their profit (respectively $\Pi_1$ and $\Pi_2$):
\newline
\begin{align*}
    &\Pi_1 = (r_A + \frac{4}{5}f_1)(C_A - \alpha_A - \frac{1}{5}f_1) + \beta_A f_1 \\
    &\Pi_2 = (r_A - \frac{1}{5}f_1)(C_A - \alpha_A - \frac{1}{5}f_1)
\end{align*}
\newline Then, $\Pi_2 = \Pi_1 + f_1 (C_A - \alpha_A - \frac{1}{5}f_1 + \beta_A)=  \Pi_1 + f_1 (q_A + \beta_A)\ge \Pi_1\\$ as $q_A \ge -\beta_A = q_A - p_A$ (because we can't have a negative price in the day-ahead market)
\newline Therefore, cooperating is not a Nash equilibrium in this game, as producers will have an incentive to deviate from their position if everyone only enters the spot market. As a result, everyone will end up entering the day-ahead market even if prices are lower.
\subsection{An analysis of the first model}
In their paper \cite{cournot}, Allaz and Villa make a strong assumption, namely that producers and arbitrageurs have a perfect foresight of what will happen in the future. Consequently, we have the ``no-arbitrage'' hypothesis, i.e., the idea that producers already know, when they sell on the day-ahead market, what demand will be on the spot market. This is particularly important when it comes to optimizing profit according to demand and capacity constraints on the spot market (producers do not want to buy or sell too much given the realization of demand and the transmission capacities they will obtain).
  In the electricity market, the situation is quite different: producers do not know what will happen in the future, and only a forecast of scenarios that may occur is available. That is why we are removing the ``no-arbitrage''  assumption from Cournot's paper \cite{cournot}. When trading on the day-ahead market, different strategies can be chosen with regard to scenarios and whether capacity constraints will be reached. Our focus is on maximizing expected profit, but we could have examined profit risk, for example.

When we looked at the optimal profit for arbitrageurs, we saw that one may wish to offer lower prices in the day-ahead market, because producers will not cooperate in the market, they will enter the day-ahead market even if they know that on average their profit will be lower.
\newline While this may seem desirable, since it will increase social welfare, in this situation we would be reducing the share of day-ahead sales. In the electricity market, it is very important to plan and launch generation long before the actual delivery of electricity. This parameter is called the activation time of each generator. Consequently, all these parameters should be taken into account when estimating demand for the day-ahead market, and not just social welfare as in the model.

  The fact that markets A and B export to each other can be inefficient in terms of social welfare. As a result, it increases transport costs and the risk of transmission line congestion. One solution could be to trade the quantity they export between themselves, in order to reduce network congestion and transmission costs.
\newline If we denote $x_{AB}$ ($x_{BA}$) the quantity exported from A to B (from B to A), we can reduce the overall amount by only exporting the quantity $z= \min\{x_{AB},x_{BA}\}$ and pay $z \times p_A$ to generators from area A and pay $z \times p_B$ to generators from area B.
  A drawback of such policy is that it reduces the number of producers which sell to a given market and as a consequence it increases uncertainty over the actual delivery of electricity in the spot market as we rely on fewer generators. Some recent examples such as Hungary where the price soared up to 1200 Eur/MWh on the $12^{th}$ of July 2023 have shown that a poor forecast of production or demand in a given region can send prices soaring.
  Then, the question is to know how much transmission capacity will be available, as this model does not study the allocation of cross-border transmission capacity. This will be the main focus of later models, where we will study hedging strategies against congestion due to lack of transmission capacity.

\section{Model 2: Physical Transmission Rights (PTRs)}
\subsection{Description}
\label{sec: PTRs}
\subsubsection*{1) Allocation of PTRs in Europe}
In Europe, the Joint Allocation Office (JAO) auctions cross-border transmission capacity rights.
Auctions are held in a day-ahead market with the following rules: %(see \cite{RulesPTRs} for the whole description):
\begin{itemize}
    \item The available capacity $K$ between country A and B is calculated by the JAO.
    \item Each market participant can submit a bid or a set of bids in quantities and prices
    \item Winning bids will be those with the highest price within the available capacity K. As a consequence, some bids can be partially or fully accepted.
    \item The price to be paid by market participants will be the lowest price within the accepted bids. If the sum of quantities for the bids offered is below K, then the price will be 0 and all bids are accepted.
    \item The market implements the policy Use It Or Lose It (UIOLI): this means that if a generator does not use its transmission capacities, then he will lose it and this can be sold again to another generator.    
\end{itemize}
On the European market, there is no secondary market for transmission rights, %It is something that we will focus on. This is especially 
which would be useful when market participants wish to adjust their capacities given the realization of the demand in the electricity market.
\subsubsection*{2) Description of the Model}
Figure~\ref{fig:2} shows the timeline of the model with auctioned capacity constraints for each generator  
primary and secondary markets in PTR:
\begin{figure}[tb]
    \centering
    \includegraphics[width = 0.45\textwidth]{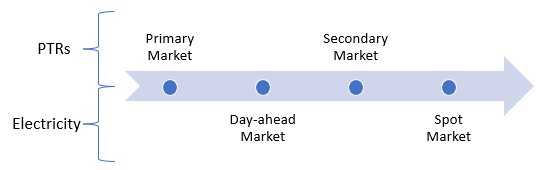}
    \caption{Timeline of the second model}
    \label{fig:2}
\end{figure}
%\FloatBarrier
\newline We use the following notations:
\begin{itemize}
    \item[$\bullet$] $K_i^p$, the capacity bought in the primary market by generator i.
    \item[$\bullet$] $K_i^s$, The capacity bought (or sold) in the secondary market by generator i. $K_i^s$ can be either positive or negative.
    \item[$\bullet$] $K_i = K_i^p+ K_i^s$ represents the number of PTRs (Physical Transmission Rights) held by a generator.
    \item[$\bullet$] $\sum_{i=1}^4 K_i \le K$, the sum of the PTRs hold by generators has to be below K.
\end{itemize}
In this model, each generator has its own capacity constraints $K_i^p$ and $K_i =K_i^s + K_i^p$ that it can use to export in the day-ahead and in the spot market. We will also introduce a secondary market for capacities run just before the spot market where generators can trade transmission capacities between each other.
In this model, we keep the assumption of the  ``no-arbitrage'' property for more simplicity (i.e. $D_A^{SO} = D_A$).

These PTRs give the right to sell in the foreign market.
\newline We will then discuss several options that we have to run the primary and secondary markets regarding prices, auctions and regulatory policies.
\subsubsection*{3) Adopted approach}
As we did in the electricity market, we will start by clearing the secondary market given what has been bought by each generator in the primary market for PTRs and what has been sold in the day-ahead market for electricity and then focus on the primary market.
The approach used in the primary market will be quite different as generators have to place bids (i.e., choose a quantity and a price) knowing the overall capacity $K$ available in the capacity market.
\newline In the secondary market, the generators will observe the real demand realized in the spot market. So, they will trade capacities such that they increase their overall profit in the scenario $s \in S$.
\newline
As a consequence, there is a redispatching of $K$ in the spot market after clearing the primary market for capacities and the day-ahead market for electricity (i.e. given a set $K_i^{p}$, what will be the final values $K_i^{s}$ if we run a secondary market between generators when they face a scenario $s \in S$).
\newline We have this identity: $\sum_i K_i^{s} \le \sum_{i=1}^2 (K_i^{p} - g_i) + \sum_{i=3}^4 (K_i^{p} - f_i)$.
  The amount that can be traded in the secondary market is below what has been bought in the primary market and not used yet in the day-ahead market.
Generators must maintain at least a capacity $K_i$ that is above their day-ahead sales in the foreign market.
 The only change with the first model of the electricity market is in the constraints that will be changed in the day-ahead and in the spot market. Now, the values of $K_i$ will vary over time, and so will the values of the optimal sales in each market.
\newline We maximize the profit in markets A and B which is the difference between the revenue generated by the sales in the spot market and the cost of production. This gives us the following equations for generator 1 (now instead of model 1, we optimize in both markets A and B at the same time):
\begin{itemize}
\item [$\bullet$] For the spot market (generation game):
$$
\left\{
    \begin{array}{ll}

        \forall s \in S, U^{Rs}_1(K_i)  = (D^s_A - e_A x_A^s )y_1^s - \alpha_A (y_1^s + f_1) \\ + (D^s_B - e_B x_B)z_1^s -( \alpha_A + \eta)(z_1^s + g_1)        \\
        s.t. z_1^s + g_1\le K_1^{p} + K_1^{s} = K_1
    \end{array}
\right.
$$
\item [$\bullet$] For the day-ahead market:
    $$
\left\{
    \begin{array}{ll}
        U^D_1(K_i)  = (D_A - e_A x_A - \alpha_A)(y_1 + f_1)\\ + (D_B - e_B x_B - \alpha_A - \eta)(z_1 + g_1)\\
        s.t. g_1\le K_1^{p}
    \end{array}
\right.
$$
\end{itemize}

We can expect that if the realization of the demand in the spot market is below the expectations in the market A and above the expectations in market B, then generators from B would be willing to sell some of their capacities to generators from market A.
\newline Here, the primary market is run before the day-ahead market and the secondary market is run just before the spot market.
A generator wants to buy adding capacity in the secondary market (respectively primary market) if and only if it increases its profit (respectively expected profit).

\subsection{Resolution of the model}
\subsubsection*{1) Price bounds in the PTRs secondary market}
We will find inequalities that define the price bounds in the PTRs secondary market.
We use the following notation:
\begin{itemize}
    \item[$\bullet$] $\forall i \in I, \Pi_i^{PTR}(K_i)$ is the optimal value of the maximization problem for generator i on the spot market given the values of transmission capacities and the day-ahead sales.
    \item[$\bullet$] $p^{PTR}_{ij}$ the price that a generator $i$ is ready to pay to buy more transmission capacities, to a generator $j$ or other paries knowing that he would prevent $j$ from buying these capacities.
\end{itemize}
\begin{assumption}
    We make the assumption that if $i$ does not buy a capacity $\delta K$ that is sold, then another generator will buy this capacity instead.
\end{assumption}
This is important: when choosing the maximum price generators are willing to pay, they will take into account the fact that they prevent another market participant from buying this capacity.
\begin{lemma}\label{maxprice}
    A generator $i$ will prefer to buy an adding capacity instead of letting $j$ buying (respectively keeping) in the primary market (respectively secondary market) if and only if:
    \newline $\forall i,j \in I, \forall t \in \{0,1\},$
\begin{equation}
    p^{PTR}_{ij} \le \dfrac{\partial \Pi_i^{PTR}(K_k)}{\partial K_i} - \dfrac{\partial \Pi_i^{PTR}(K_k)}{\partial K_j}
\end{equation}
\end{lemma}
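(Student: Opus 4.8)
The plan is to reduce the lemma to a first-order (marginal) comparison of generator $i$'s profit under the two mutually exclusive outcomes permitted by the preceding assumption. Fix a generator $i$, a rival $j$, and an infinitesimal quantity of capacity $\delta K > 0$ offered for sale. By the assumption, $i$ faces a binary choice: either it acquires $\delta K$ itself, raising its held capacity to $K_i + \delta K$ at cost $p^{PTR}_{ij}\,\delta K$, or it abstains, in which case $\delta K$ is acquired (primary market) or retained (secondary market) by $j$, raising $j$'s capacity to $K_j + \delta K$. Viewing $\Pi_i^{PTR}$ as a function of the full capacity vector $(K_k)_{k \in I}$, the net payoffs to $i$ in the two cases are
\begin{align*}
\text{(buy)}\quad & \Pi_i^{PTR}(K_i + \delta K,\, K_j,\, \cdot) - p^{PTR}_{ij}\,\delta K, \\
\text{(abstain)}\quad & \Pi_i^{PTR}(K_i,\, K_j + \delta K,\, \cdot),
\end{align*}
and $i$ prefers to buy precisely when the first is at least the second.

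First I would expand both value functions to first order in $\delta K$. This is where the dependence of $\Pi_i^{PTR}$ on the \emph{entire} capacity vector is essential: $\partial \Pi_i^{PTR}/\partial K_i$ is the direct marginal value of $i$'s own capacity (an own shadow price, which the envelope theorem identifies with the multiplier on the constraint $z_i^s + g_i \le K_i$ of the generation game), while $\partial \Pi_i^{PTR}/\partial K_j$ is the \emph{cross} effect, namely how relaxing the rival's capacity perturbs the spot-market Nash equilibrium and hence $i$'s equilibrium profit. Substituting the expansions into the preference inequality, the common zeroth-order term $\Pi_i^{PTR}(K_k)$ cancels, and collecting terms leaves
\begin{equation*}
\left(\frac{\partial \Pi_i^{PTR}(K_k)}{\partial K_i} - \frac{\partial \Pi_i^{PTR}(K_k)}{\partial K_j} - p^{PTR}_{ij}\right)\delta K \ge 0.
\end{equation*}
Dividing by $\delta K > 0$ gives exactly the claimed inequality, and since every step is an equivalence the biconditional follows. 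The same derivation covers both $t = 0$ (primary market, where the foregone alternative is $j$ \emph{buying}) and $t = 1$ (secondary market, where it is $j$ \emph{keeping}), because in each case $i$'s alternative is precisely the state in which $K_j$ rather than $K_i$ is incremented.

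The main obstacle is justifying the first-order expansion, i.e.\ the differentiability of the value function $\Pi_i^{PTR}$ in the capacity parameters and the passage to the limit $\delta K \to 0$. The own-derivative is handled by the envelope theorem, applicable here since the generation game has a strongly concave objective with linear capacity constraints and (by the Slater/active-set analysis used earlier for the LCP) well-defined multipliers, guaranteeing local differentiability away from degenerate active-set changes. The cross-derivative is the genuinely delicate part, as it requires differentiating through the parametric Nash equilibrium of the spot market, tracking how $j$'s binding capacity shifts the equilibrium quantities $y_k^s$ and the clearing price $q_A^s$, which in turn feed into $i$'s profit. I would obtain it either by implicit differentiation of the equilibrium system $A X_A^s = B_x^s$ with respect to $K_j$ (the capacity entering through the multiplier $\lambda_j^{0s}$), or more cleanly by differentiating the closed-form equilibrium expressions already derived for $q_A^s$ and $y_k^s$. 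Provided these derivatives exist and the equilibrium is non-degenerate, the infinitesimal comparison is exact and the lemma follows.
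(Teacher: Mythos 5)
Your proposal is correct and follows essentially the same route as the paper: the paper's proof is exactly the first-order comparison $\bigl(\partial \Pi_i^{PTR}/\partial K_i - \partial \Pi_i^{PTR}/\partial K_j - p^{PTR}_{ij}\bigr)\delta K \ge 0$, obtained by noting that buying shifts $K_i$ up and $K_j$ down by $\delta K$ relative to abstaining. Your additional care about differentiability of the value function (envelope theorem for the own-derivative, implicit differentiation of the equilibrium system for the cross-derivative) is a welcome elaboration that the paper simply takes for granted.
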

\begin{proof}
    A generator wants to buy an adding capacity $\delta K$ if: 
    \begin{equation*}
        \dfrac{\partial \Pi_i^{PTR}(K_k)}{\partial K_i}\delta K - \dfrac{\partial \Pi_i^{PTR}(K_k)}{\partial K_j}\delta K - p^{PTR}_{ij} \delta K \ge 0
    \end{equation*}
The capacity of the generator $i$ will increase by $\delta K$ and that of the generator $j$ will decrease $\delta K$.
\end{proof}
\begin{lemma}\label{minprice}
    In the secondary market, a generator $j$ would like to sell a capacity $\delta$K to a generator $i$ if and only if:
\begin{equation}
    p^{PTR}_{ij} \ge \dfrac{\partial \Pi_j^{PTR}(K_k)}{\partial K_j} - \dfrac{\partial \Pi_j^{PTR}(K_k)}{\partial K_i}
\end{equation}
\end{lemma}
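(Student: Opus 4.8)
The plan is to mirror the marginal-profit argument already used for Lemma~\ref{maxprice}, but now from the standpoint of the \emph{seller} $j$ rather than the buyer $i$. The guiding principle is individual rationality: a profit-maximizing generator $j$ is willing to part with an infinitesimal capacity $\delta K > 0$ to $i$ precisely when the transaction does not decrease its own profit $\Pi_j^{PTR}$. So I would characterize ``$j$ would like to sell'' by a single scalar participation inequality and then read off the price bound.

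First I would write down the total first-order change in $j$'s profit induced by the trade. Selling $\delta K$ to $i$ moves the capacity profile in two coupled ways: $K_j$ decreases by $\delta K$ while $K_i$ increases by $\delta K$. To first order these contribute
\begin{equation*}
\Delta \Pi_j^{PTR} = -\dfrac{\partial \Pi_j^{PTR}(K_k)}{\partial K_j}\,\delta K + \dfrac{\partial \Pi_j^{PTR}(K_k)}{\partial K_i}\,\delta K,
\end{equation*}
where the second term is the competitive externality of handing extra capacity to rival $i$ (generically harmful to $j$, i.e. $\partial \Pi_j^{PTR}/\partial K_i \le 0$, so it tends to depress the change). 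On top of this, $j$ collects the sale revenue $p^{PTR}_{ij}\,\delta K$.

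Next I would impose the participation condition, namely that $j$ sells iff the net effect is nonnegative,
\begin{equation*}
-\dfrac{\partial \Pi_j^{PTR}(K_k)}{\partial K_j}\,\delta K + \dfrac{\partial \Pi_j^{PTR}(K_k)}{\partial K_i}\,\delta K + p^{PTR}_{ij}\,\delta K \ge 0,
\end{equation*}
and then divide through by $\delta K > 0$. Since the divisor is strictly positive the inequality direction is preserved, and rearranging to isolate $p^{PTR}_{ij}$ gives exactly the claimed bound. Both directions of the ``if and only if'' are immediate: the displayed scalar inequality is, after division by the positive quantity $\delta K$, equivalent to the stated price condition, so there is nothing further to argue for either implication.

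The step I expect to require the most care is the sign bookkeeping of the cross term $\partial \Pi_j^{PTR}/\partial K_i$: unlike the buyer side of Lemma~\ref{maxprice} (where the rival's capacity \emph{decreases}, giving a minus sign), here the rival's capacity \emph{increases}, so this term enters with a plus sign and correctly pushes up the minimum acceptable price $j$ demands. I would also flag the implicit modeling assumption that $j$ internalizes the effect of $i$'s enlarged capacity on its own payoff — the seller-side analogue of the hypothesis behind Lemma~\ref{maxprice} — and note that the whole argument is a first-order (marginal) one: $\Pi_j^{PTR}$ is evaluated at the current profile $K_k$ and higher-order terms in $\delta K$ are negligible, which is what licenses the clean equivalence above.
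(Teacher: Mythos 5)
Your argument is exactly the paper's: the paper likewise writes the seller's participation condition $p^{PTR}_{ij}\,\delta K - \frac{\partial \Pi_j^{PTR}(K_k)}{\partial K_j}\,\delta K + \frac{\partial \Pi_j^{PTR}(K_k)}{\partial K_i}\,\delta K \ge 0$, noting that $K_i$ increases and $K_j$ decreases by $\delta K$, and divides by $\delta K>0$ to obtain the bound. Your additional remarks on the sign of the cross term and the first-order nature of the argument are consistent with, and slightly more explicit than, the paper's one-line justification.
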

\begin{proof}
    A generator $j$ wants to sell an amount $\delta K$ to generator $j$ if:
    \begin{equation*}
        p^{PTR}_{ij} \delta K - \dfrac{\partial \Pi_j^{PTR}(K_k)}{\partial K_j}\delta K + \dfrac{\partial \Pi_j^{PTR}(K_k)}{\partial K_i}\delta K \ge 0
    \end{equation*}
\newline The capacity of the generator $i$ will be increased by $\delta K$ and the one from generator $j$ will be decreased by $\delta K$
\end{proof}
\begin{theorem}
    The price for PTRs between generator $i$ (the seller) and $j$ (the buyer) will be within these limits in the secondary market:
    % TODO: Let's not use the boxed environment in a theorem. 
\begin{equation}
    \dfrac{\partial \Pi_i^{PTR}(K_k)}{\partial K_i} - \dfrac{\partial \Pi_i^{PTR}(K_k)}{\partial K_j} \le p^{PTR}_{ij} 
\end{equation}
\begin{equation}
    p^{PTR}_{ij} \le \dfrac{\partial \Pi_j^{PTR}(K_k)}{\partial K_j} - \dfrac{\partial \Pi_j^{PTR}(K_k)}{\partial K_i}
\end{equation}
\label{pricebounds}
\end{theorem}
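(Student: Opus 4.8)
The plan is to obtain Theorem~\ref{pricebounds} as an immediate consequence of the two preceding lemmas, since each lemma supplies exactly one of the two required inequalities. The economic content is that a unit of capacity changes hands in the secondary market only if the trade is individually rational for both parties: the buyer will not pay more than the marginal benefit the extra capacity brings, and the seller will not release capacity for less than the marginal profit it forgoes. First I would fix, for the pair of generators named in the statement, which one acts as seller and which as buyer, and then invoke each lemma with its indices assigned to match these roles.

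For the upper bound I would apply Lemma~\ref{maxprice} to the buyer $j$. That lemma says a generator prefers to acquire the additional capacity $\delta K$ exactly when the price does not exceed the difference between the marginal value of its own capacity and the marginal harm it suffers from the competitor holding that capacity; the second term appears because, under the assumption preceding the lemma, declining to buy means a competitor buys instead. Instantiating with $j$ as buyer and $i$ as the competitor yields $p^{PTR}_{ij} \le \frac{\partial \Pi_j^{PTR}(K_k)}{\partial K_j} - \frac{\partial \Pi_j^{PTR}(K_k)}{\partial K_i}$. For the lower bound I would apply Lemma~\ref{minprice} to the seller $i$: the seller parts with the capacity only if the price at least compensates the marginal profit it loses net of the competitive relief it gains, which gives $p^{PTR}_{ij} \ge \frac{\partial \Pi_i^{PTR}(K_k)}{\partial K_i} - \frac{\partial \Pi_i^{PTR}(K_k)}{\partial K_j}$. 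Intersecting the buyer's and the seller's participation constraints, any price at which the trade is mutually agreeable necessarily lies in the stated two-sided interval.

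The step requiring the most care is bookkeeping rather than analysis: the lemmas are written with the first index denoting the buyer and the second the seller, whereas the theorem names $i$ as the seller and $j$ as the buyer, so I must relabel consistently when composing them and keep careful track of which partial derivative carries the ``prevention'' (competitor) sign. I anticipate no genuine analytic obstacle beyond this relabeling. Finally, I would append a brief remark that the interval is nondegenerate precisely when the seller's reservation price does not exceed the buyer's, i.e.\ when gains from trade exist; this is exactly the regime in which the secondary market reallocates capacity, and it is what makes the two-sided bound meaningful rather than vacuous.
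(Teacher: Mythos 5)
Your proof is correct and takes essentially the same route as the paper, whose entire argument is ``we deduce these bounds from Lemma~\ref{maxprice} and Lemma~\ref{minprice}'': you apply the buyer's participation constraint (Lemma~\ref{maxprice}) for the upper bound and the seller's (Lemma~\ref{minprice}) for the lower bound, with the index relabeling the paper leaves implicit. Your closing remark on when the interval is nondegenerate is exactly the content of the corollary that follows the theorem in the paper.
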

\begin{corollary}
    There will not be any trades between generator $i$ (the seller) and $j$ (the buyer) if:
    \begin{equation}
    \label{impossibletrades}
        \dfrac{\partial \Pi_i^{PTR}(K_k)}{\partial K_i} - \dfrac{\partial \Pi_i^{PTR}(K_k)}{\partial K_j} > \dfrac{\partial \Pi_j^{PTR}(K_k)}{\partial K_j} - \dfrac{\partial \Pi_j^{PTR}(K_k)}{\partial K_i}
    \end{equation}
\end{corollary}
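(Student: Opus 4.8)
The plan is to derive the corollary directly from Theorem~\ref{pricebounds}, treating it as a pure feasibility argument about the existence of a mutually acceptable price. The key observation is that Theorem~\ref{pricebounds} sandwiches any realized trade price $p^{PTR}_{ij}$ between a lower bound (the seller's reservation price, coming from Lemma~\ref{minprice}) and an upper bound (the buyer's maximum willingness to pay, coming from Lemma~\ref{maxprice}). A trade can therefore occur only if the closed interval cut out by these two bounds is nonempty.

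First I would restate the two inequalities of Theorem~\ref{pricebounds} as a single compound feasibility condition: a price acceptable to both parties exists if and only if
$$\frac{\partial \Pi_i^{PTR}(K_k)}{\partial K_i} - \frac{\partial \Pi_i^{PTR}(K_k)}{\partial K_j} \le \frac{\partial \Pi_j^{PTR}(K_k)}{\partial K_j} - \frac{\partial \Pi_j^{PTR}(K_k)}{\partial K_i}.$$
Next I would take the contrapositive: if the left-hand side strictly exceeds the right-hand side, which is precisely hypothesis~\eqref{impossibletrades}, then no real number $p^{PTR}_{ij}$ can simultaneously satisfy both bounds of Theorem~\ref{pricebounds}. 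Since, by the two lemmas, a generator buys only when the price is at most its marginal gain and sells only when the price is at least its marginal loss, the empty price interval means no exchange is individually rational for both parties, and hence no trade takes place.

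The argument requires no computation, so there is no substantial obstacle; the effort is entirely bookkeeping. The one point demanding care is the assignment of the buyer and seller roles, since Lemmas~\ref{maxprice} and~\ref{minprice} cast generator $i$ as the buyer whereas the statement of Theorem~\ref{pricebounds} labels $i$ as the seller. I would therefore verify that the two bounds are oriented consistently (lower bound $\le p^{PTR}_{ij} \le$ upper bound) before forming the interval, so that the direction of~\eqref{impossibletrades} genuinely reads ``lower bound $>$ upper bound.'' I would also flag the boundary behaviour: the inequality in~\eqref{impossibletrades} is strict precisely so that the degenerate case of equality, where a single price clears the market and a zero-surplus trade remains possible, is correctly excluded from the ``no trade'' conclusion.
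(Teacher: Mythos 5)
Your proposal is correct and follows essentially the same route as the paper, whose proof is simply the one-line observation that the bound from Lemma~\ref{maxprice} and the bound from Lemma~\ref{minprice} cannot both hold when the price interval they define is empty. Your additional care about the buyer/seller labeling (which is indeed swapped between the lemmas and Theorem~\ref{pricebounds}) and about the strict-versus-weak boundary case is a worthwhile clarification, but it does not change the argument.
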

\begin{proof}
    We deduce these bounds from Lemma \ref{maxprice} and Lemma \ref{minprice}.
\end{proof}
\begin{remark}
    When constraint on generator $i \in I$ becomes inactive, partial derivatives regarding $K_i$ are null:
\begin{equation*}
    \forall j \in I, \dfrac{\partial \Pi_i^{PTR}(K_k)}{\partial K_i} = \dfrac{\partial \Pi_j^{PTR}(K_k)}{\partial K_i} =0.
\end{equation*}
\end{remark}

\begin{remark}
There will be trades in the secondary market until that $\forall i,j \in I$:
\begin{itemize}
    \item Either Equation \ref{impossibletrades} is true
    \item Or constraints on generators $i$ and $j$ are inactive.
\end{itemize}
\end{remark}

 We can know the price that every generator will be willing to pay in each market given the realization of the demand in the spot market, if we get the solutions of the maximization problem in each state. This will set the price in the secondary market at each time.
\subsubsection*{1) Resolution of the secondary market for capacities}

We want to know what are the possible allocations of PTRs after running the secondary market.
\newline The goal of this is to see how efficient the implementation of a secondary market could be, and to understand the behaviour of generators in this market.
\newline We want to exhibit the phenomenon of ``rights witholding'' which can be seen by considering the secondary market between two generators from the same area (it is similar if you consider the sell of capacities between generators from different regions). \newline We will continue using notation from Figure~\ref{fig:explo_c}.
\begin{itemize}
    \item Case 1: Capacity constraints are active for generator 3 and not for generator 4
    \item Case 2: Capacity constraints are active for generators 3 and 4
\end{itemize}
\inComplete{
\newline Now, we will study the different possible trades when there are active constraints. We will make the assumption that the constraint on generator 3 is active (we will have symmetric results for the other generators).We assume that 3 is a buyer as his constraints are active but we have symmetric situations where he could be a seller if for example the constraint is active for the other generator. We can also have cases where even if they will not use these capacities, some generators will be willing to buy more capacities if their price is very low to prevent other market participants from buying and using it. Table~\ref{tab-possible} sums up all the possible trades for generator 3 in the secondary market:
\begin{table}
%\begin{center}
 \begin{tabular}{|c|c|c|c|c|c|}
        \hline 
         Case & 3 trades w/ & 4 active? & 1 active? & 2 active? \\ \hline 
         A.1 & 4 & No & &   \\ \hline
         A.2 & 4 & Yes & &  \\ \hline
         B.1 & 1 & No & No & No   \\ \hline
         B.2 & 1 & No & Yes & No  \\ \hline
         B.3 & 1 & Yes & No & No   \\ \hline
         B.4 & 1 & Yes & Yes & No  \\ \hline
         B.5 & 1 & No & Yes & Yes \\ \hline
         B.6 & 1 & Yes & Yes & Yes  \\ \hline
    \end{tabular}
%\end{center}
\caption{The cases considered within the resolution of the secondary market for capacities.}
\label{tab-possible}
\end{table}}
Here, given the KKT conditions of the Lagrangian of the maximization problem, we have that:
\newline $\forall t \in \{1,2\},\forall j \in I,\lambda^R_j.F^R_j(\{f_i,g_i, y_i, z_i\}_{i \in I})=0$ (complementary condition), we can compute the values of Lagrange multipliers ($\lambda^R_j$) in each case:
\begin{itemize}
    \item[$\bullet$]  First case: $\lambda^R_j= 0 $ so we have the same equations as before (the capacity constraint is not active)
    \item[$\bullet$] Second case: $F^R_j(\{f_i,g_i, y_i, z_i\}_{i \in I}) = 0$, the capacity constraint is active
\end{itemize}
We can compute the values of $\lambda^R_j \ge 0$, when there is congestion in the cross-border transmission (by introducing the values of $f_i$,$g_i$,$y_i$,$z_i$) and enumerating which constraints are active or not. It will give us the different possible values for $\lambda^R_j$.
\newline Then, the only problem will be to know which constraints are active and solve the right system of equations in each case to get the values of $\lambda^R_j$.
\newline Regarding trades between generators from the same market, we can deduce the results without computing the values of the profits and its derivatives regarding $\{K_i\}_{i \in I} $.

\textbf{Case 1:}
If we are in this case, the constraint on generator 4 is inactive, some of its capacities will remain unused. It means that generator 4 will lose money if it chooses to use (instead of keeping unused) these capacities.
\begin{lemma}
If generator 4 does not use all the capacity that it owns, it means that we have:
\begin{equation*}
    q_A \le e_A(y_4 + f_4)
\end{equation*}
\end{lemma}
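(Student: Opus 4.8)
The plan is to treat the statement as a complementary-slackness consequence of generator~4's spot-market optimization in market~A, in exactly the KKT framework already set up for the generation game. Generator~4 is a market-B producer exporting to market~A, so the only part of its profit depending on $y_4^s$ is the market-A revenue $q_A^s\,y_4^s$ net of the export cost $(\alpha_B+\eta)(y_4^s+f_4)$, subject to the export-capacity constraint $f_4+y_4^s\le K_4$ with multiplier $\lambda_4^R\ge 0$. First I would write the stationarity condition $\partial U_4^{Rs}/\partial y_4^s=\lambda_4^R$; using $q_A^s=D_A^s-e_A x_A^s$ with $\partial x_A^s/\partial y_4^s=1$ this gives the marginal-profit identity $q_A^s-e_A y_4^s-(\alpha_B+\eta)=\lambda_4^R$, which is just the generator-4 row of the KKT system derived in Appendix~\ref{app: 2-B}.

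Next I would invoke the hypothesis. \emph{Generator~4 does not use all of its capacity} is precisely strict slackness $f_4+y_4^s<K_4$, so complementary slackness forces $\lambda_4^R=0$; equivalently, by the envelope remark that $\partial\Pi_4^{PTR}/\partial K_4=\lambda_4^R=0$ when the constraint is inactive. The remaining, decisive step is to convert this marginal condition into the claimed bound by accounting for the Cournot price-depression effect on generator~4's \emph{infra-marginal} volume. The key observation is that, under the no-arbitrage assumption $D_A^{SO}=D_A$, the day-ahead and spot prices coincide, so committing one more unit to market~A lowers the common price $q_A$ by $e_A$ across the entire quantity generator~4 places there, namely $y_4+f_4$, and not merely across the spot increment $y_4$. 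Hence the net marginal value of activating one more unit of otherwise-idle capacity is $q_A-e_A(y_4+f_4)$ (less marginal cost), and leaving the capacity unused means this value is non-positive, yielding $q_A\le e_A(y_4+f_4)$.

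I expect the main obstacle to be exactly this infra-marginal bookkeeping: one must justify that the elasticity term multiplies the full market-A placement $y_4+f_4$ rather than the spot quantity $y_4$ alone, and one must handle the production-plus-transmission cost $\alpha_B+\eta$ consistently---either showing it is absorbed into the day-ahead contribution $e_A f_4$ or that it only slackens the inequality in the near-zero marginal-cost regime highlighted earlier for renewables. Once the sign of this marginal comparison is pinned down, the inequality follows immediately, the symmetric statement holds for the other exporter (generator~3) by relabeling, and the bound is exactly the formalization of the surrounding claim that generator~4 would lose money by activating its spare capacity.
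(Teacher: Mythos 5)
Your decisive step coincides with the paper's own proof, which is exactly the one\--line marginal computation $\delta \Pi = \delta_K q_A - \delta_K e_A (y_4+f_4)$ (``additional revenue minus the loss from depressing the price on the whole quantity placed in market A''), declared non\--positive because generator~4 prefers to leave the capacity idle. So the infra\--marginal bookkeeping you describe --- the elasticity term multiplying $y_4+f_4$ rather than $y_4$ alone --- is indeed what the paper intends, and in that sense you have reconstructed the argument.

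However, the two points you flag as ``obstacles'' and leave open are precisely where the justification is needed, and neither is resolved by your KKT preamble. First, stationarity of the spot\--stage Lagrangian with an inactive constraint gives the \emph{equality} $q_A^s - e_A y_4^s - (\alpha_B+\eta) = 0$, in which the price\--depression term multiplies only the spot increment $y_4^s$; this implies $q_A \le e_A(y_4^s+f_4)$ only under the extra hypothesis $\alpha_B+\eta \le e_A f_4$, which is nowhere assumed (and fails in the high\--marginal\--cost regime the paper itself discusses). So the complementary\--slackness route you start with does not lead to the stated bound, and is in tension with the expression you end on. Second, in the infra\--marginal version the cost term cannot simply be ``absorbed'': if you retain $\alpha_B+\eta$, non\--positivity of the marginal profit yields only $q_A \le e_A(y_4+f_4) + \alpha_B+\eta$, which is strictly weaker than the claim. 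The paper's proof silently drops the cost term from $\delta\Pi$; your proposal honestly names this as the crux but does not supply the missing argument, so as written it does not yet establish the lemma --- it reproduces the paper's gap rather than closing it.
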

\begin{proof}
    If generator 4 sells more electricity (a quantity $\delta_K$), it will earn:
    \begin{equation*}
        \delta \Pi = +\delta_K q_A - \delta_K  e_A (y_4 + f_4)
    \end{equation*}
    The additional profit made by selling the quantity $\delta_K$ minus the loss of profit due to the price reduction in the electricity market. This quantity is negative as it does not want to sell this capacity.
\end{proof}
\begin{corollary}
    Generator 4 would like to be paid by generator 3 more than:
    \begin{equation*}
        p_{34}^{PTR} \ge  e_A (y_4 + f_4) \ge q_A
    \end{equation*}
\end{corollary}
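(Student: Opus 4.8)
The plan is to read the corollary as an immediate consequence of the seller's reservation price established in Lemma~\ref{minprice}, specialized to the seller $j=4$ and the buyer $i=3$, together with the inequality $q_A \le e_A(y_4+f_4)$ furnished by the preceding Lemma. Applying Lemma~\ref{minprice} with $j=4$ as the seller and $i=3$ as the buyer gives $p_{34}^{PTR} \ge \frac{\partial \Pi_4^{PTR}(K_k)}{\partial K_4} - \frac{\partial \Pi_4^{PTR}(K_k)}{\partial K_3}$, so the entire task reduces to evaluating these two partial derivatives in Case~1 (generator $3$ constrained, generator $4$ unconstrained) and showing that their difference equals $e_A(y_4+f_4)$.

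First I would dispose of the own-capacity term. Since we are in Case~1, generator $4$'s transmission constraint is inactive and it holds spare capacity; by the Remark stating that all partial derivatives with respect to $K_4$ vanish once generator $4$'s constraint becomes inactive, we get $\frac{\partial \Pi_4^{PTR}(K_k)}{\partial K_4}=0$. Intuitively, generator $4$ is not using all the capacity it already owns, so an extra marginal unit of its own capacity is worthless to it.

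Next I would compute $\frac{\partial \Pi_4^{PTR}(K_k)}{\partial K_3}$ by the same marginal-revenue accounting used in the proof of the preceding Lemma. Because generator $3$'s constraint is active, an extra $\delta K$ of capacity transferred to it is fully used: generator $3$ sells $\delta K$ more into market~$A$, raising total sales and lowering the clearing price $q_A$ by $e_A\,\delta K$. This price drop is applied to generator $4$'s entire market-$A$ position $y_4+f_4$, so $\Pi_4^{PTR}$ falls by $e_A(y_4+f_4)\,\delta K$, giving $\frac{\partial \Pi_4^{PTR}(K_k)}{\partial K_3} = -e_A(y_4+f_4)$. Substituting, the reservation price becomes $p_{34}^{PTR} \ge 0 - \big(-e_A(y_4+f_4)\big) = e_A(y_4+f_4)$, and chaining with the preceding Lemma's bound $e_A(y_4+f_4)\ge q_A$ yields the stated double inequality.

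The delicate step is the evaluation of $\frac{\partial \Pi_4^{PTR}(K_k)}{\partial K_3}$, and I would be careful on two points. First, one must argue that competitors' Cournot responses do not alter the first-order accounting: generator $4$ sits at an interior optimum (its own constraint slack), so by the envelope theorem the indirect effect routed through its own $y_4$ contributes nothing, and the marginal price impact of generator~$3$'s extra sales is exactly $e_A$, matching the convention already used in the preceding Lemma. Second, one must confirm the relevant quantity is the full market-$A$ position $y_4+f_4$ rather than the spot component $y_4$ alone; this is consistent with the no-arbitrage setting ($D_A^{SO}=D_A$, hence $p_A=q_A$), under which the price reduction is felt on both the day-ahead and spot portions of generator~$4$'s sales. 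Once these are settled, the corollary is immediate, and it exhibits the \emph{rights-withholding} phenomenon: generator $4$ will not part with its spare capacity below $e_A(y_4+f_4)\ge q_A$, because any capacity it releases would be used by generator $3$ to depress the price in market~$A$.
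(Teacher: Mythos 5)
Your proof is correct and follows essentially the same route the paper intends: the paper states the corollary without a separate proof, as an immediate consequence of Lemma~\ref{minprice} (seller's reservation price) combined with the preceding lemma's bound $q_A \le e_A(y_4+f_4)$, which is exactly your chain $p_{34}^{PTR} \ge \frac{\partial \Pi_4^{PTR}}{\partial K_4} - \frac{\partial \Pi_4^{PTR}}{\partial K_3} = e_A(y_4+f_4) \ge q_A$. Your explicit evaluation of the two partial derivatives (the own-capacity term vanishing by the Remark, the cross term equalling $-e_A(y_4+f_4)$ under the paper's marginal-price accounting) only makes explicit what the paper leaves implicit, and matches the lower bound the paper records in its Case~1 conclusion.
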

We can make the same reasoning regarding generator 3 and the maximum price that it is willing to pay for a quantity $\delta_K$:
\begin{lemma}
    Generator 3 is willing to buy a capacity $\delta_K$ to generator 4 if:
    \begin{equation*}
        p_{34}^{PTR} \delta K\le \delta \Pi = q_a\delta K - e_A (y_3+ f_3)\delta K
    \end{equation*}
\end{lemma}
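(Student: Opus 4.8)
The plan is to establish this as the buyer-side counterpart of the preceding seller-side lemma for generator~4, reusing the same marginal-profit calculus but now from the perspective of the party whose capacity constraint is active. The idea is to compute the value to generator~3 of one extra increment $\delta_K$ of transmission capacity and then require it to purchase exactly when that value is at least the price $p_{34}^{PTR}\delta_K$ it must pay.

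First I would express generator~3's realised revenue in market~A. Under the no-arbitrage property retained in this model ($D_A^{SO}=D_A$, hence $p_A=q_A$), the day-ahead and spot legs are settled at the common price $q_A$, so the total revenue generator~3 draws from market~A is $q_A\,(y_3+f_3)$ with
\[
q_A = D_A - e_A\sum_{k=1}^4 (y_k+f_k).
\]
Next I would differentiate with respect to the quantity generator~3 places in market~A. Acquiring $\delta_K$ of capacity lets generator~3 push one extra increment into~A; this adds $q_A\,\delta_K$ on the new unit while depressing the price by $e_A\,\delta_K$ on its inframarginal quantity $(y_3+f_3)$, giving the marginal gain
\[
\delta\Pi = q_A\,\delta_K - e_A\,(y_3+f_3)\,\delta_K.
\]
Finally, generator~3 outlays $p_{34}^{PTR}\delta_K$ for the capacity, so it buys precisely when the net change $\delta\Pi - p_{34}^{PTR}\delta_K$ is nonnegative, which rearranges to the stated bound $p_{34}^{PTR}\delta_K \le \delta\Pi$.

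The step I expect to need the most care is the price-impact bookkeeping: one must verify that the induced fall in $q_A$ is charged against generator~3's \emph{entire} placed quantity $(y_3+f_3)$, not merely its spot leg $y_3$, which is exactly where the no-arbitrage identification of the day-ahead and spot prices enters and what makes the $e_A(y_3+f_3)$ term match the $e_A(y_4+f_4)$ term of the seller-side lemma. I would also keep the treatment of the marginal production-and-transmission cost $(\alpha_B+\eta)$ consistent with that earlier statement, where it is likewise suppressed from $\delta\Pi$; and, as in the corollary for generator~4, I would invoke the standing Assumption that some other generator would otherwise take the freed capacity, so that $\delta_K$ represents a genuine export opportunity for generator~3 rather than a hypothetical one. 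Combined with the seller-side lemma, this buyer-side bound then determines, via the general price bounds of Theorem~\ref{pricebounds}, whether and at what price a secondary-market trade between generators~3 and~4 can occur, which is the mechanism behind the rights-withholding phenomenon under study.
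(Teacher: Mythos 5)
Your proposal is correct and follows essentially the same route as the paper, which establishes this lemma simply by ``the same reasoning'' as the preceding seller-side lemma for generator~4: the marginal gain from placing one extra unit is the revenue $q_A\,\delta_K$ less the price-impact loss $e_A(y_3+f_3)\,\delta_K$ on the inframarginal quantity, and generator~3 buys exactly when this covers the price paid. Your explicit justification of why the price impact is charged against the full quantity $(y_3+f_3)$ via the no-arbitrage identification, and your note that the cost term $(\alpha_B+\eta)$ is suppressed consistently with the generator-4 lemma, only make the paper's implicit argument more precise.
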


\textbf{Conclusion:} Trades will be possible in this case as we would have if and only if:
\begin{equation}
\label{caseA.1}
 e_A (y_4+ f_4) \le p_{34}^{PTR} \le q_a - e_A (y_3+ f_3)
\end{equation}
Generator 4 will trade with generator 3 until its constraint becomes active (reach case 2) or that constraint on generator 3 becomes inactive or such that Equation \ref{caseA.1} becomes impossible (i.e. $q_A= e_A x_{BA})$. If both constraints are inactive, no one will be willing to buy or sell capacities as these capacities will remain unused and as a consequence have a value equal to zero.
The case where one generator keeps an unused capacity instead of selling it is called "right-witholdings"

\textbf{Case 2:}
\inComplete{
\newline We recall that we are in the case where constraints on 3 and 4 are active.
As constraints are active for both generators, any capacity that is traded between generators will be used to export from market B to market A. As a consequence, these transactions will not change the price of electricity in market A.
\newline So if generator 3 buys the capacity $\delta_K$ to generator 4, we have: 

$\forall s \in S, (\dfrac{\partial \Pi^{Rs}_3(K_k)}{\partial K_3} - \dfrac{\partial \Pi^{Rs}_3(K_k)}{\partial K_4})\delta_K= q_A^s \delta_K$.
\newline On the other hand, if generator 4 sells the capacity $\delta_K$ to generator 3, we have: 

$\forall s \in S, (\dfrac{\partial \Pi_4(K_k)}{\partial K_3} -\dfrac{\partial \Pi_4(K_k)}{\partial K_4})\delta_K= - q_A^s \delta_K$

So when we compute the value of the partial derivatives we have: 
\begin{equation*}
\label{resolsecondary}
      \dfrac{\partial \Pi_3^{Rs}(K_k)}{\partial K_3} - \dfrac{\partial \Pi_3^{Rs}(K_k)}{\partial K_4} + \dfrac{\partial \Pi_4^{Rs}(K_k)}{\partial K_3} -\dfrac{\partial \Pi_4^{Rs}(K_k)}{\partial K_4} = 0
\end{equation*}
}
\newline Generators are indifferent to trade between each other as trading between each other will not change prices in markets A and B it will just change the quantity sold. The marginal price of the PTRs market between 3 and 4 will be exactly $q_A^s$. The observations when generators from different countries trade between each other will be similar to that.
\inComplete{

  \textbf{Trades of PTRs between generators from different areas}
 \newline We can now focus on what happens regarding trades between different areas.
 
\begin{assumption}
As in the model, generators from a given market are the same, we can make the assumption that they will have the same behaviour in the PTRs market and as a consequence have the same capacity: $K_A = K_1 = K_2$ and $K_B =K_3 = K_4$.
\end{assumption}

  As a consequence, we will only study the cases where constraints in a given market are either all active or inactive (i.e. Case B.3 / Case B.6).

 First and foremost, we see that in Case B.6, all constraints are active, and so the transmission network is fully congested, there is no unused transmission capacity. This is not something that represents a problem as no generators keep unused PTRs. The resolution of this case can be found in the Appendix (see \ref{app: case B-6}).
  On the other hand, in Case B.3, some generators keep an unused capacity which is something that we do not want. As a consequence, we will see what happens in this case and which policies can be implemented to avoid this situation.
  To solve the case B.3, we will compute the values of the Lagrange multipliers when constraints are active. Let us take the case when constraints on generator 3 and 4 are active.

\textbf{Case B.3}
\label{main: case B-3 B-6}
\newline In this case, we assume that $K_3=K_4=K_B$, we will compute the values of the partial derivatives of $\Pi_i^{Rs}$ to get the conditions when there will be trades between 3 (the buyer) and 1 (the seller):
\label{caseB.3}
\begin{align*}  
     &\dfrac{\partial \Pi_3^{Rs}(K_k)+ \Pi_1^{Rs}(K_k)}{\partial K_3} \ge 0  \Leftrightarrow \\&
     K_B \le \frac{1}{2e_A}[D_A^s + 8 \alpha_A - 9(\alpha_B + \eta) + e_A(7[f_1+f_2]+ 3 f_3)]
\end{align*}

We also know the maximum amount that generator 3 is willing to export, it is:
\newline $K_B^{max} = \frac{1}{5e_A}(D_A^s - 3 (\alpha_B+\eta) + 2\alpha_A - e_A \sum_{i=1}^4 f_i)+ f_3$
\inComplete{
\newline Given the equation \ref{caseB.3}, we have potentially rights withholding if:
\begin{align*}
    &K_3^{max} > \frac{1}{2e_A}[D_A^s + 8 \alpha_A - 9(\alpha_B + \eta) \\& + e_A(7[f_1+f_2]+ 3 f_3)] \Leftrightarrow \\
    & 3 D_A^s + e_A[36 f_1 + 36 f_2 + 2 f_4 + 14f_3] <  36 \alpha_A - 39(\alpha_B + \eta)
\end{align*}
\begin{comment}
    \begin{equation*}
\begin{split}
    $$\Leftrightarrow \frac{1}{5e_A}(D_A^s - 3 (\alpha_B+\eta) + 2\alpha_A - e_A \sum_{i=1}^4 f_i)+ f_3 > \\ \frac{1}{2e_A}[D_A^s + 8 \alpha_A - 9(\alpha_B + \eta) + e_A(7[f_1+f_2]+ 3 f_3)]$$
\end{split}
\end{equation*}
\end{comment}
}
We are more likely to observe rights withholding if:
\begin{itemize}
    \item The demand is high in the spot market so everyone produces a lot and the profit are very high.
    \item The marginal cost $\alpha_A$ is low, it means that generators from A sell a lot so if the price decrease in market A, it will reduce a lot their profit
    \item The marginal cost $\alpha_B + \eta$ is high, it means that generators from B will not be willing to pay a lot to export as the profit they will make will be very low
\end{itemize}
Without any regulatory policy, rights withholding is very likely to happen when we have asymmetric generators . We will see in the next section what is done to avoid that.
}
\paragraph*{Conclusion of the secondary market}

It is important to notice that when we will run the secondary market, the values of $\delta K_i$ will evolve and as a consequence, the dispatch of capacities and state of the constraints will also evolve.
To conclude, these different cases show that after running the secondary market we could be in a situation where some generators keep a remaining $\delta K_i$ unused even if some other generators have their constraint active and would like to buy an adding transmission capacity. This phenomenon is called ``right withholding''  and it can happen when there is generators who would like to export more while others are keeping some of their transmission capacities unused.
Regarding prices in the secondary market, Equation \eqref{pricebounds} provides a bound for the prices, we do not study the exact values of prices, we can get them by looking on the behaviour of sellers between each others.
  The resolution of the primary market exhibits the same phenomenon of rights witholdings.
\inComplete{
\subsubsection*{2) Resolution of the primary market for capacities}
We recall that the allocation of the PTRs is based on auctions.
  In the primary market, the situation is quite different as there is an available capacity $K$ that can be fully or partially allocated to generators.
\begin{assumption}
    Generators will make the assumption that if they do not buy a capacity, it means that the other generator will bought the capacity. So if generators from market A holds a capacity $K_A$, they will assume that generators from market B holds the rest of available capacities (i.e. $K-K_A$).
\end{assumption}

  To know how the allocation will be done, we can use the same methodology as before with the secondary market, comparing the optimal values of their profit in each state (i.e. each dispatch of capacities) to know what prices they will be willing to pay for each quantity of capacities.
Generators from the same market have the same costs and face the same demand so their behaviour will be the same in the PTRs market.
   As a consequence, we will only compare bids from generators 1 which represents market A and 3 which represents market B to know how the allocation will work and which price will be chosen.
Below are the following objective functions $U_1(K_1)$ and $U_3(K_3)$ that generators seek to optimize regarding $K_1$ and $K_3$. $\Pi^1_1$ and $\Pi^1_3$ represent the values of the expected profit of generators 1 and 3 given the dispatch of capacities between generators after the primary market.
\begin{align*}
    &U_1 = \Pi^1_1(K_A = K_1, K_B = K-K_1) - p^{PTR}_1 K_1\\
    &U_3 = \Pi^1_3(K_A = K - K_3, K_B = K_3) - p^{PTR}_3 K_3
\end{align*}

We can compute the value of $p^{PTR}_1(K_1)$ and $p^{PTR}_3(K_3)$, we have:
\begin{itemize}
    \item 
    $\dfrac{\partial U_1(K_1,K)}{\partial K_1}=0 \\ \Leftrightarrow p^{PTR}_1 K_1 = \dfrac{\partial \Pi^1_1(K_A = K_1, K_B = K-K_1)}{\partial K_1}$

\item 
    $\dfrac{\partial U_3(K_3,K)}{\partial K_3}=0 \\ \Leftrightarrow p^{PTR}_3 K_3 = \dfrac{\partial \Pi^1_1(K_A = K - K_3, K_B = K_3)}{\partial K_3}$
\end{itemize}

Again, we can see that rights withholding drives generators bids, when they place a set of bids, they consider how they can prevent their competitors from entering in the electricity market by removing some transmission rights from the PTRs market.
This primary market shows us the interest of managing rights withholding and why such policies should be implemented. }

\subsection{Possible regulatory policies to reduce market power and rights withholding}

\inComplete{  We can try to fine generators who do rights withholding, knowing the values of the equilibrium which drive prices of Physical transmission Rights would be helpful in doing so.}

Some markets can implement a policy that forces electricity producers to either sell or use their capacity (e.g. Use it or sell it, UIOSI, in Europe) by implementing a penalty if a generator does not sell its remaining capacity -- or simply removing its rights outright.
Alternatively, financial assistance could be provided with the loss of profit due to the sale of an extra capacity to the other generators, which could be financed with the income generated by the sales of capacities in the primary market. 
In either case, these could be seen as market power mitigation (MPM) policies \cite{10258359}.
%We want to reduce the probability of getting into the case of ``right withholding''.
In the next subsections, we discuss how the value of the marginal congestion cost $\eta$ can be set, 
and analyze the Use-it or Sell-it policy, and we point to its main weaknesses.

\subsubsection{Management of congestion costs by the choice of the value of congestion costs $\eta$}
 
 We saw that in case ``right withholding'', we will stay in a state where generators from 1/2 can keep an unused capacity at the end of the secondary market if generator 1 holds unused capacities and generator 3 wants to buy more transmission capacities:
\begin{align*}
    &3 D_A^s + e_A[36 f_1 + 36 f_2 + 2 f_4 + 14f_3] <  36 \alpha_A - 39(\alpha_B + \eta)\\&
    \delta K_1 = K_1^p - K_1^{max} > K_3^{max} - K_3^p
\end{align*}
\label{eta policy a eviter}
As a consequence, one can choose the value of $\eta$ to reduce cases where this situation happens. 
Reducing $\eta$ in a scenario $s\in S$ will also change the optimal values of production and increase the values of the maximum amount of electricity the generators want to export ($K_1^{max}$ for generator 1 and $K_3^{max}$ for generator 3). 
It will reduce the cases where \ref{eta policy a eviter} is satisfied, but it will not always increase the capacity available to the congested transmission line.
This situation (\ref{eta policy a eviter}) occurs when there is a low demand in the local market A. Generators 1 and 2 do not wish another competitor to enter their markets, and generators 3/4 are not willing to pay for the transmission capacities, as the price would always be very low in this market.
\newline It will be also dependent on the realisation of the local market B, since the maximum quantity that generators 1/2 are willing to export will depend on the parameters of this market.
\inComplete{
As generators will have to pay in order to buy capacities in the primary market, the primary market's revenue can be used to support the implementation of a negative $\eta$ if necessary.} 
It is an alternative to congestion rent and FTRs suggested by Joskow and Tirole \cite{Tirole2000}. This lends credit to the idea of a negative congestion cost $\eta$ as it will be a way to redistribute the profit made in the primary market.
\newline 
It will also give an interesting situation for generators as they could maybe buy some options to prevent them from these variations of $\eta$.
These options would create a market for Financial Transmission Rights (FTRs). However, Joskow and Tirole \cite{Tirole2000} show that these products tend to reinforce market power, which is undesirable.

\subsubsection{Model Use-it or Sell-it (UIOSI) or Use-it or loose it (UIOLI)}
UIOLI is the current model used in Europe regarding the allocation of cross-border capacities. (Notice there is no secondary market. Use-it or sell-it assumes there is a secondary market.)
 To assess the efficiency of such a policy (UIOSI), we will compare the production and price bounds of secondary market in PTRs.
We will use the following notation:
\begin{itemize}
    \item ${\Pi_1^B}_{keep}$ represents the profit if generator 1 decides to not use its remaining capacity
    \item ${\Pi_1^B}_{use}$ represents the profit if generator 1 decides to use its remaining capacity
    \item [$\bullet$] $\delta \Pi_1$ the marginal difference of profit for generator 1 if it uses its remaining capacity $\delta K$ (be careful, $\delta \Pi_1 \ne \dfrac{\partial \Pi_1(K_k)}{\partial K_1} = 0$ , because normally this capacity remains unused as it will decreased generator 1's profit). Given the definitions, we have: ${\Pi_1^B}_{use} = {\Pi_1^B}_{keep} + \delta \Pi_1\delta K$.
\end{itemize}
With that policy, we can now see that the equation \ref{pricebounds} that used to set the bounds for prices in this market is no longer true. Now, as generators 1/2 have only two options, sell or use, if they use the remaining capacity $\delta K$ in the case where they would prefer to keep it unused, it means that it would decrease their profit. Using this capacity $\delta K$ will change their profit:
\begin{align*}
    &{\Pi_1^B}_{use} = {\Pi_1^B}_{keep} + \delta \Pi_1 \times \delta K \\ &= {\Pi_1^B}_{1 keep} + \delta K (D_B - x_{total}^B - \delta K  - [x_{1}^B + \alpha_A + \eta]) 
\end{align*}
As generator A would prefer to keep this capacity unused, we know that:
${\Pi_1^B}_{use} - {\Pi_1^B}_{keep} = \delta \Pi_1 \times \delta K \le 0$.
\newline Generator 1 would prefer to sell to generator 3 (it will be the same condition with 4) instead of using the remaining capacity $\delta K$ if:
$$\begin{aligned}
    p^{PTR}_{13} \delta K + \dfrac{\partial \Pi_1(K_k)}{\partial K_3} \delta K -\delta \Pi_1 \times \delta K > 0 \\
    \Leftrightarrow p^{PTR}_{13} \ge \delta \Pi_1 - \dfrac{\partial \Pi_1(K_k)}{\partial K_3}
\end{aligned}$$
This policy lowers the lower bound from equation \ref{minprice} as there are incentives to sell instead of keeping unused capacities $\delta \Pi_1 \le 0 = \dfrac{\partial \Pi_1(K_k)}{\partial K_1}$. The issue is that in some cases, some generators will overproduce because they do not want to sell their remaining capacity to a competitor.

In the European implementation, Use-it or Loose-it policies may incentivize generators to overproduce and not encourage producers from other markets to export even if a small amount of additional capacity will be available to them with these policies.
On the other hand, the Clean Energy Package suggested 70\% minimum utilization of the interconnectors' capacity in Europe \cite[Art. 16.8]{IME}, 
effectively tasking the regulators to address any such issues. 

\section{Conclusion and future work}

This study demonstrated the crucial role of demand estimates and pricing cross-border transmission in regulating electricity markets. 
By lowering prices in the markets preceding the spot market, one can effectively promote social welfare under certain market conditions. 
Despite the lower prices, competitors have an incentive to participate in the day-ahead and intraday markets to avoid the potential disadvantages that could result from a prisoner's dilemma \cite{tucker1959contributions}. 
When cooperation between producers is not possible, this pricing approach may still be advantageous for social welfare, 
as the only Nash equilibrium will be for all producers to enter the day-ahead market even if prices are lower. 
However, they will slightly reduce their production in this case, compared with the case where the price is the expected price in the spot market.

%On the other hand, our study also revealed potential problems associated with these policies, as they can lead to a delay in production on the part of producers who anticipate a reduction in their sales on the day-ahead market due to lower prices. This is of particular concern in markets where extreme price rises must be avoided, as if demand is higher than expected, this behavior can lead to power shortages.

In addition, we examined the phenomenon of rights withholding in the transmission-capacity market and found that the current policy (UIOSI or UIOLI) of encouraging full capacity utilization or sales can lead to a production surplus.
To solve this problem, one could encourage exports with a negative congestion cost, which would encourage greater utilization of transmission capacity.

\inComplete{
In this study, we acknowledge that we have not taken into account the variations between generators, in particular the different activation times and cost structures associated with each type of resource. To improve our work, we propose a market design that takes these differences into account, particularly in a multi-stage electricity market where different types of generators, such as nuclear power plants, wind turbines and hydro turbines, coexist and will be used depending on the time before delivery contracts are executed. For example, energy generated by a nuclear power plant is more likely to be traded on a day-ahead market, as the activation time is very high, whereas renewable energies such as wind or solar power are more likely to be traded on the spot market, as they depend on a good production forecast. By taking into account different activation times and cost structures, market design can be adapted to encourage specific types of electricity generation.
}

\bibliographystyle{ieeetr}
\bibliography{refs}

\begin{thebibliography}{10}

\bibitem{Wilson2002}
R.~Wilson, ``Architecture of power markets,'' {\em Econometrica}, vol.~70,
  no.~4, pp.~1299--1340, 2002.

\bibitem{meeus2020evolution}
L.~Meeus, {\em The Evolution of Electricity Markets in Europe}.
\newblock Loyola de Palacio Series on European Energy Policy, Edward Elgar
  Publishing, 2020.

\bibitem{IME}
{European Commission}, ``Regulation ({EU}) 2019/943 of the {European Parliament
  and of the Council} of 5 {J}une 2019 on the internal market for electricity
  (recast),'' tech. rep., 2019.
\newblock EUR-Lex 02019R0943-20220623.

\bibitem{Chapter23Energylaw}
J.~Rumpf and C.~Banet, {\em Chapter 23: Energy law, Research Handbook on the
  Enforcement of EU Law}, pp.~365 -- 380.
\newblock Cheltenham, UK: Edward Elgar Publishing, 2023.

\bibitem{LAM201880}
L.~H. Lam, V.~Ilea, and C.~Bovo, ``European day-ahead electricity market
  coupling: Discussion, modeling, and case study,'' {\em Electric Power Systems
  Research}, vol.~155, pp.~80--92, 2018.

\bibitem{congestionreview}
A.~Pillay, S.~P. Karthikeyan, and D.~Kothari, ``Congestion management in power
  systems -- a review,'' {\em International Journal of Electrical Power \&
  Energy Systems}, vol.~70, pp.~83--90, 2015.

\bibitem{10379146}
Q.~Pang, V.~Trovato, A.~De~Paola, and G.~Strbac, ``Market-based operation of
  interconnectors in a multi-area power network with meshed topology,'' {\em
  IEEE Transactions on Energy Markets, Policy and Regulation}, pp.~1--13, 2023.

\bibitem{CACM}
{European Commission}, ``Commission regulation (eu) 2015/1222 of 24 july 2015
  establishing a guideline on capacity allocation and congestion management,''
  tech. rep., 2015.
\newblock EUR-Lex 02019R0943-20220623.

\bibitem{KRISTIANSEN2020100444}
T.~Kristiansen, ``The flow based market coupling arrangement in {E}urope:
  Implications for traders,'' {\em Energy Strategy Reviews}, vol.~27,
  p.~100444, 2020.

\bibitem{CORONA20221768}
L.~Corona, A.~Mochon, and Y.~Saez, ``Electricity market integration and impact
  of renewable energy sources in the {Central Western Europe} region:
  {E}volution since the implementation of the flow-based market coupling
  mechanism,'' {\em Energy Reports}, vol.~8, pp.~1768--1788, 2022.

\bibitem{Felling2023}
T.~Felling, B.~Felten, P.~Osinski, and C.~Weber, ``Assessing improved price
  zones in {E}urope: Flow-based market coupling in {Central Western Europe} in
  focus,'' {\em The Energy Journal}, vol.~44, no.~6, pp.~71--112, 2023.

\bibitem{6861275}
A.~Sleisz, P.~Sores, and D.~Raisz, ``Algorithmic properties of the all-european
  day-ahead electricity market,'' in {\em 11th International Conference on the
  {European Energy Market} (EEM14)}, pp.~1--6, 2014.

\bibitem{EFET}
{European Federation of Energy Traders}, ``{EFET} insight into accessing
  cross-border transmission capacity in electricity,'' tech. rep., 2022.

\bibitem{BACKER2023107124}
M.~Backer, D.~Keles, and E.~Kraft, ``The economic impacts of integrating
  {E}uropean balancing markets: The case of the newly installed {aFRR} energy
  market-coupling platform {PICASSO},'' {\em Energy Economics}, vol.~128,
  p.~107124, 2023.

\bibitem{Tirole2000}
P.~L. Joskow and J.~Tirole, ``Transmission rights and market power on electric
  power networks,'' {\em The Rand Journal of Economics}, pp.~450--487, 2000.

\bibitem{cournot}
B.~Allaz and J.-L. Vila, ``Cournot competition, forward markets and
  efficiency,'' {\em Journal of Economic theory}, vol.~59, no.~1, pp.~1--16,
  1993.

\bibitem{elasticity}
Z.~Csereklyei, ``Price and income elasticities of residential and industrial
  electricity demand in the {European Union},'' {\em Energy Policy}, vol.~137,
  p.~111079, 2020.

\bibitem{facchinei2003finite}
F.~Facchinei and J.-S. Pang, {\em Finite-dimensional variational inequalities
  and complementarity problems}.
\newblock Springer, 2003.

\bibitem{tucker1959contributions}
A.~Tucker and R.~Luce, {\em Contributions to the Theory of Games}.
\newblock No.~v. 4 in Annals of Mathematics Studies, Princeton University
  Press, 1959.

\bibitem{10258359}
R.~K. Bansal, Y.~Chen, P.~You, and E.~Mallada, ``Market power mitigation in
  two-stage electricity markets with supply function and quantity bidding,''
  {\em IEEE Transactions on Energy Markets, Policy and Regulation}, vol.~1,
  no.~4, pp.~512--522, 2023.

\end{thebibliography}

\section*{Acknowledgements}
 This work received funding from the National Centre for Energy II (TN02000025).
 We would also like to acknowledge many fruitful discussions with Claudia Alejandra Sagastizabal.

\clearpage
\onecolumn

\appendix

\subsection{Generation game}
In the first model (see \ref{B-Generation game}), we have to solve this system of equations to get the values of spot sales $x_1$ and $x_2$ depending on the forward sales $f_1$ and $f_2$.
\label{app: 1-B}
$$
\left\{
    \begin{array}{ll}
        \dfrac{\partial U^R_1(x_1;f_1,f_2, x_2)}{\partial x_1}=0 \\
        \dfrac{\partial U^R_2(x_2;f_1,f_2, x_1)}{\partial x_2}=0 
    \end{array}
\right.
\Leftrightarrow
\left\{
    \begin{array}{ll}
      D - e(2x_1 + x_2 - f_1) - \alpha_1 = 0\\
     D - e(2x_2 + x_1 - f_2) - \alpha_2 = 0
    \end{array}
\right.
\Leftrightarrow
    \left\{
    \begin{array}{ll}
        x_1 = \frac{1}{2}\left(\frac{D- \alpha_1}{e}- x_2 + f_1\right)\\
        x_2 = \frac{1}{2}\left(\frac{D- \alpha_2}{e}- x_1 + f_2\right)
    \end{array}
\right.
$$
\newline Then, we compute the Nash equilibrium of this game (i.e. solve the system of equations given by these two reponse functions), it gives us:
\begin{equation*}
   x_1(f_1,f_2) =\frac{\frac{1}{e}(D - 2\alpha_1 + \alpha_2) + 2f_1 - f_2}{3} \\
\end{equation*}
\begin{equation*}
x_2(f_1,f_2) =\frac{\frac{1}{e}(D - 2\alpha_2 + \alpha_1) + 2f_2 - f_1}{3} \\
\end{equation*}
\begin{equation*}
q(f_1,f_2) =\frac{\frac{1}{e}(D + \alpha_1 + \alpha_2) - f_1 - f_2}{3} 
\end{equation*}

\subsection{Generation game with constraints}
\label{app: 2-B}
When we compute the Lagragian of the problem (see \ref{lagrange}), the KKT conditions give us the following system of equations:
\newline We will use this notation: $\forall s \in S, \forall j \in I,
$\newline$
\begin{aligned}[t] q^{j,s}_A(\{y^s_i,f_i\}_{[\![1;4]\!]}) &= D_A^s - e_A \sum_{i \ne j}[y_i^s + f_i ] - e_A f_j  = q_A^s(\{y_i^s,f_i\}_{[\![1;4]\!]}) + e_A y_j^s\end{aligned}$.
\newline As a consequence we can rewrite the values of the Lagrangian of the problem is, in the spot market A (with $\forall i \in \{3,4\},\lambda_j^{0s} \ge 0$ the Lagrange multipliers associated with the constraint $F_j^{Rs}$) :
\newline
$\forall s \in S,\forall j \in \{1,2\},$
\newline
$$\begin{aligned}
    L_j^{Rs}(\{f_i,y_i^s\}_{i \in I},\eta) &= -U^{Rs}_j(\{f_i, y_i^s\}_{i \in I}, \eta)
          = -y_j^s (q^{j,s}_A(\{y_i^s,f_i\}_{i \in I}) - e_Ay_j^s)+ \alpha_A (y_j^s + f_j) 
\end{aligned}$$
 \newline
$\forall s \in S,\forall j \in \{3,4\},$
\newline
$$\begin{aligned}
    L_j^{Rs}(\{f_i, y_i^s\}_{i \in I},\eta,\lambda^{0s}_j) &= -U^{Rs}_j(\{f_i, y_i\}_{i \in I}, \eta) + \lambda^{0s}_j F^{Rs}_j(\{f_i, y_i^s\}_{i \in I})\\
        & = -y_j^s (q^{j,s}_A(\{y_i^s,f_i\}_{i \in I}) - e_Ay_j^s)+ (\alpha_B + \eta + \lambda^{0s}_j) (y_j^s + f_j) - \lambda_j^{0s}K_j
\end{aligned}$$
\newline We recall that $q_A^s$ represent the price in the spot market depending on the production of each generator. This notation will be useful as $\forall j \in I, q^{j,s}_A$ does not depend on $j$ and so: \label{derive}
\newline $\dfrac{\partial q^{j,s}_A(\{y^s_i,f_i\}_{[\![1;4]\!]})}{\partial y_j^s} = \dfrac{\partial D_A^s - e_A (\sum_{i \ne j}[y_i^s + f_i ] - f_j )}{\partial y_j^s} = 0 $. 

$$   
         \forall j \in I , \dfrac{\partial L^{Rs}_j(\{f_i,y_i^s\}_{i \in I},\eta,\lambda^{0s}_j)}{\partial y_j^s} = 0   
\Leftrightarrow
\left\{
    \begin{array}{ll}
        \forall j \in\{1,2\}, y^s_j = \frac{q_A^{j,s}(\{f_i,y_i^s\}_{i \in I}) - \alpha_A}{2 e_A}\\
        \forall j \in\{3,4\}, y^s_j = \frac{q_A^{j,s}(\{f_i,y_i^s\}_{i \in I}) - \alpha_B -\eta - \lambda^{0s}_j}{2 e_A}
    \end{array}
\right.\\
$$
\newline
$$
\Leftrightarrow
\left\{
    \begin{array}{ll}
        \forall j \in\{1,2\}, y^s_j = \frac{D_A^s - e_A \sum_{i\ne j}(y_i^s + f_i)- e_A f_j  - \alpha_A}{2 e_A}\\
        \forall j \in\{3,4\}, y^s_j = \frac{D_A^s - e_A \sum_{i\ne j}(y_i^s + f_i) - e_A f_j- \alpha_B -\eta - \lambda^{0s}_j}{2 e_A}
    \end{array}
\right.
\Leftrightarrow
\left\{
    \begin{array}{ll}
        \forall j \in\{1,2\}, y^s_j = \frac{D_A^s - e_A \sum_{i\ne j}x_j^{As}- e_A f_j  - \alpha_A}{2 e_A}\\
        \forall j \in\{3,4\}, y^s_j = \frac{D_A^s - e_A \sum_{i\ne j}x_j^{As} - e_A f_j- \alpha_B -\eta - \lambda^{0s}_j}{2 e_A}
    \end{array}
\right.\\
$$
\newline
$$
\Leftrightarrow
\left\{
    \begin{array}{ll}
        \forall j \in\{1,2\}, x_j^{As} + \frac{1}{2}\sum_{i \ne j} x_i^{As} = \frac{D_A^s - \alpha_A + e_A f_j}{2 e_A}\\
        \forall j \in\{3,4\}, x_j^{As} + \frac{1}{2}\sum_{i \ne j} x_i^{As} = \frac{D_A^s - \alpha_B - \eta - \lambda^{0s}_j + e_A f_j }{2 e_A}
    \end{array}
\right.\\
$$
\subsection{Day-ahead market}
\label{app: 2-B day ahead market}
Given the KKT conditions, we know that $\forall j \in I,$ a solution ($f_j,\lambda_j^{1}$) is a saddle point of the Lagrangian $L_j^D(f_j,\lambda_j^{1})$.
\newline So, we minimize these equations in $\{f_j\}_{j \in I}$, we solve these equations:
$\forall j \in I ,\dfrac{\partial L^D_j(\{f_i\}_{i \in I},\eta,\lambda^{1}_j)}{\partial f_j} = 0 $.
\bigskip\newline This gives us the reponse functions $\forall j \in I, f_j(f_{i \ne j})$. As we will minimize over the expected profit, some average values will appear:
\begin{itemize}
    \item $C_A= \sum_{s\in S} p_s C_A^s$
    \item $\forall j \in I, \lambda_j^{0}= \sum_{s\in S} p_s \lambda_j^{0s}$
    \item $q_A=\sum_{s\in S} p_s q_A^s$
\end{itemize}

 $\forall j \in I, \dfrac{\partial L^D_j(\{f_i\}_{i \in [\![1;4]\!]},\eta,\lambda^{1}_j)}{\partial f_j} = 0 $
\newline$$\Leftrightarrow 
\left\{
    \begin{array}{ll}
        \forall j \in\{1,2\}, \sum_{s\in S} p_s[\beta_A^s + \frac{4}{5}(C_A^s - \frac{e_A}{5}\sum_{k \in I}f_k - \alpha_A) - \frac{e_A}{5}(r_j^s - \frac{1}{5}\sum_{k \ne j}f_k +\frac{4}{5}f_j)]=0\\
        \forall j \in\{3,4\}, \sum_{s\in S} p_s[\beta_A^s + \frac{4}{5}(C_A^s - \frac{e_A}{5}\sum_{k \in I}f_k - \alpha_B -\eta) - \frac{e_A}{5}(r_j^s - \frac{1}{5}\sum_{k \ne j}f_k +\frac{4}{5}f_j) -\lambda^1_j]=0
    \end{array}
\right.$$
$\Leftrightarrow 
\left\{
    \begin{array}{ll}
        \forall j \in\{1,2\}, \beta_A + \frac{4}{5}(C_A - \frac{e_A}{5}\sum_{k \in I}f_k - \alpha_A) - \frac{e_A}{5}(r_j - \frac{1}{5}\sum_{k \ne j}f_k +\frac{4}{5}f_j)=0\\
        \forall j \in\{3,4\}, \beta_A + \frac{4}{5}(C_A - \frac{e_A}{5}\sum_{k \in I}f_k - \alpha_B -\eta) - \frac{e_A}{5}(r_j - \frac{1}{5}\sum_{k \ne j}f_k +\frac{4}{5}f_j) -\lambda^1_j=0
    \end{array}
\right.$
\newline
\newline$\Leftrightarrow 
\left\{
    \begin{array}{ll}
        \forall j \in\{1,2\}, \beta_A + \frac{4}{5}(C_A- \alpha_A)- \frac{e_A}{5}r_j-\frac{3e_A}{25}\sum_{k \ne j}f_k - \frac{8 e_A}{25}f_j=0\\
        \forall j \in\{3,4\}, \beta_A + \frac{4}{5}(C_A- \alpha_B - \eta )- \frac{e_A}{5}r_j-\frac{3e_A}{25}\sum_{k \ne j}f_k - \frac{8 e_A}{25}f_j- \lambda_j^1=0
    \end{array}
\right.$
\newline$\Leftrightarrow 
\left\{
    \begin{array}{ll}
        \forall j \in\{1,2\}, \frac{8}{5} f_j + \frac{3}{5}\sum_{i \ne j} f_i = \frac{3}{5 e_A}(D_A -3 \alpha_A+ 2 (\alpha_B + \eta) + \lambda_0^3 + \lambda_0^4) + \frac{1}{e_A}\beta_A\\
        \forall j \in\{3,4\}, \frac{8}{5} f_j + \frac{3}{5}\sum_{i \ne j} f_i = \frac{3}{5 e_A}( D_A -3 (\alpha_B + \eta)+ 2 \alpha_A  -4 \lambda_0^{j} + \lambda_0^{-j}) + \frac{1}{e_A}(\beta_A - \lambda^1_j)
    \end{array}
\right.$
\newline Then, we compute the Nash equilibrium for these solutions, by finding the solution of this matrix system $AF_A=B$ with:
\bigskip \newline $A = 
  \left[ {\begin{array}{cccc}
    \frac{8}{5} & \frac{3}{5} & \frac{3}{5} & \frac{3}{5}\\
    \frac{3}{5} & \frac{8}{5} & \frac{3}{5} & \frac{3}{5} \\
    \frac{3}{5} & \frac{3}{5} & \frac{8}{5} & \frac{3}{5} \\
    \frac{3}{5} & \frac{3}{5} & \frac{3}{5}&\frac{8}{5}\\
         \end{array} } \right]\; 
  F_A =
  \left[ {\begin{array}{c}
    f_1 \\
    f_2 \\
    f_3\\
    f_4 \\
  \end{array} } \right]\; 
  B = 
  \frac{1}{e_A}\left[ {\begin{array}{c}
    \frac{3}{5 }(D_A -3 \alpha_A+ 2 (\alpha_B + \eta) + \lambda_0^3 + \lambda_0^4) +\beta_A \\
    \frac{3}{5 }(D_A -3 \alpha_A+ 2 (\alpha_B + \eta) + \lambda_0^3 + \lambda_0^4) + \beta_A \\
    \frac{3}{5 }( D_A -3 (\alpha_B + \eta)+ 2 \alpha_A  -4 \lambda_0^{3} + \lambda_0^{4}) + \beta_A- \lambda^1_3\\
    \frac{3}{5}( D_A -3 (\alpha_B + \eta)+ 2 \alpha_A  -4 \lambda_0^{4} + \lambda_0^{3}) + \beta_A- \lambda^1_4\\
  \end{array} } \right]$
\bigskip \newline We have the following solutions for day-ahead sales $f_i$:
\newline
$\begin{aligned}\label{fsanscontraintes}
    \forall i \in \{1,2\}, f_i = \frac{1}{17 e_A}[3(D_A  -9 \alpha_A + 8 (\alpha_B+\eta)  + 4 (  \lambda^0_3 +4\lambda^0_4))+ 3(\lambda_3^1 + \lambda_4^1)+ 5\beta_A]
\end{aligned}$
\newline $
\begin{aligned}
     \forall i \in \{3,4\}, f_i = \frac{1}{17 e_A}[3(D_A -9 (\alpha_B+\eta) + 8\alpha_A  - 13 \lambda_{j}^0+ 4 \lambda_{-j}^0) -14\lambda_{j}^1 + 3\lambda_{-j}^1+ 5\beta_A]\\
\end{aligned}$
\subsection{Optimal arbitrageurs profit}
\label{app: 2-B arbitrageurs profit}
When optimizing the optimal arbitrageurs (see \ref{arbitrage}) profit, we compute the partial derivatives regarding $\beta_A$, we have:
\begin{gather*}
\dfrac{\partial z(\beta_A)}{\partial \beta_A} =
    q_A - \frac{10}{17e_A}(\alpha_A + \alpha_B + \eta) - \sum_{i=1}^4 f_i - \frac{20}{17e_A}\beta_A=0\\
    \Leftrightarrow \frac{1}{17 }(D_A + 8(\alpha_A + \alpha_B + \eta)+4(\lambda_3^0 + \lambda_4^0) + \frac{3}{5} (\lambda_3^1 + \lambda_4^1)- 4 \beta_A) - \frac{10}{17e_A}(\alpha_A + \alpha_B + \eta) \\- \frac{1}{17 e_A}(12 D_A -3 (\lambda_3^0 + \lambda_4^0) - 6 (\alpha_A + \alpha_B + \eta)- 3 (\lambda_3^1 + \lambda_4^1) + 20 \beta_A)
     - \frac{20}{17e_A}\beta_A=0\\
     \Leftrightarrow e_A(D_A + 8(\alpha_A + \alpha_B + \eta)+4(\lambda_3^0 + \lambda_4^0) + \frac{3}{5} (\lambda_3^1 + \lambda_4^1)- 4 \beta_A) - 10(\alpha_A + \alpha_B + \eta) \\-(12 D_A -3 (\lambda_3^0 + \lambda_4^0) - 6 (\alpha_A + \alpha_B + \eta)- 3 (\lambda_3^1 + \lambda_4^1) + 20 \beta_A)
     - 20\beta_A=0\\
     \Leftrightarrow\beta_A = \frac{D_A(e_A -12) + (\alpha_A + \alpha_B + \eta)(8e_a - 4) + (\lambda_3^0 + \lambda_4^0)(4e_A +3)+ (\lambda_3^1 + \lambda_4^1)(\frac{3}{5}e_A+3) }{4 e_A + 40}\\
\end{gather*}
\begin{equation*}
     \Leftrightarrow D_A^{SO} = \frac{D_A(5e_A + 28) + (\alpha_A + \alpha_B + \eta)(8e_a - 4) + (\lambda_3^0 + \lambda_4^0)(4e_A +3)+ (\lambda_3^1 + \lambda_4^1)(\frac{3}{5}e_A+3) }{4 e_A + 40}
\end{equation*}

\inComplete{
\subsection{Second model: computation of the partial derivatives of the profit regarding $\{K_i\}_{ i \in I} $}
\label{app: cases B.3 and B.6}
To solve the cases B.3 and B.6 presented in \ref{main: case B-3 B-6}, we will compute the values of the Lagrange multipliers when constraints are active. Let's take the case when constraints on generator 3 and 4 are active.

We will use these notations:
\begin{itemize}
    \item $\forall s\in S$: $q_A^s =\frac{1}{5}(D_A^s + 2 (\alpha_A + \alpha_B + \eta) + \lambda^{0s}_3 + \lambda^{0s}_4 - e_A\sum_{k=1}^4 f_k) = \Gamma_A^s + \frac{1}{5}(\lambda^{0s}_3 + \lambda^{0s}_4)$
    \item $\forall s\in S$: $y_1^s =\frac{1}{5e_A}(D_A^s - 3 \alpha_A + 2(\eta + \alpha_B) + \lambda^{0s}_3 + \lambda^{0s}_4 - e_A \sum_{k=1}^4 f_k ):= \Gamma_1^s + \frac{1}{5e_A}(\lambda^{0s}_3 + \lambda^{0s}_4)$
    \item $\forall s\in S$: $y_3^s =\frac{1}{5e_A}(D_A^s - 3 (\eta + \alpha_B) + 2\alpha_A - 4\lambda^{0s}_3 + \lambda^{0s}_4 - e_A \sum_{k=1}^4 f_k )= \Gamma_3^s + \frac{1}{5e_A}(- 4\lambda^{0s}_3 + \lambda^{0s}_4)$
    \item $\forall s\in S$: $y_4^s = \frac{1}{5e_A}(D_A^s - 3 (\eta + \alpha_B) + 2\alpha_A - 4\lambda^{0s}_4 + \lambda^{0s}_3 - e_A \sum_{k=1}^4 f_k )= \Gamma_4^s + \frac{1}{5e_A}(- 4\lambda^{0s}_4 + \lambda^{0s}_3)$
\end{itemize}
We have to solve this system of equations:
\bigskip \newline $
\left\{
    \begin{array}{ll}
        \forall s \in S, F^{Rs}_3(f_3,y_3^s) = f_3 + y_3^s - K_3 = 0\\
    \forall s \in S, F^{Rs}_4(f_4,y_4^s) = f_4 + y_4^s - K_4 = 0
    \end{array}
\right. \Leftrightarrow  A \Lambda = B
$
\bigskip \newline with:
\bigskip \newline $A = \frac{1}{5}
  \left[ \begin{array}{cc}
    -4 & 1 \\
    1 & -4
\end{array} \right],
  \Lambda =
  \left[ {\begin{array}{c}
    \lambda_3^{0s} \\
    \lambda_4^{0s}\\
  \end{array} } \right]\; 
  $\newline$
  B = 5 e_A
  \left[ {\begin{array}{c}
    K_3 - f_3 -\Gamma_3^s\\
    K_4 - f_4 -\Gamma_4^s\\
  \end{array} } \right]$
\bigskip \newline The solution of this system gives us:
\begin{equation*}
    \begin{split}
    \forall s \in S, \lambda^{0s}_3 = - \frac{e_A}{3}[4 K_3 + K_4 - 4 f_3 - f_4 -4 \Gamma_3^s - \Gamma_4^s]\\
    \forall s \in S, \lambda^{0s}_4 = - \frac{e_A}{3}[4 K_4 + K_3 - 4 f_4 - f_3 -4 \Gamma_4^s - \Gamma_3^s]
    \end{split}
\end{equation*}
Then, we can compute the values of prices and quantities in market A depending on $K_3$ and $K_4$:
\begin{itemize}
    \item $\forall s\in S$: $q_A^s =\frac{1}{3}(D_A^s + 2 \alpha_A  - e_A[f_1 + f_2 + K_3 + K_4]) = Q_A^s - \frac{e_A}{3}(K_3 + K_4)$
    \item $\forall s\in S$: $y_1^s =\frac{1}{3}(\frac{D_A^s - \alpha_A}{e_A} - f_1 -f_2 - K_3 - K_4):= Y_1^s - \frac{1}{3}(K_3 + K_4)$
    \item $\forall s\in S$: $y_3^s = K_3 - f_3$
    \item $\forall s\in S$: $y_4^s = K_4 - f_4$
\end{itemize}
As a consequence, we can compute the values of the profit in this case:
\begin{itemize}
    \item $\Pi_3^{Rs}= [Q_A^s - (\alpha_B+ \eta) -\frac{e_A}{3}(K_3 + K_4)][K_3 - f_3]- (\alpha_B + \eta) f_3$
    \item $\Pi_1^{Rs}= [Q_A^s - \alpha_A - \frac{e_A}{3}(K_3 + K_4)][Y_1^s - \frac{1}{3}(K_3 + K_4)]- \alpha_A f_1$
    \item $\dfrac{\partial \Pi_3^{Rs}(K_k)}{\partial K_3}=Q_A^s -(\alpha_B + \eta) + \frac{e_A}{3}(f_3 - K_4) - \frac{2e_A}{3}K_3$
    \item $\dfrac{\partial \Pi_1^{Rs}(K_k)}{\partial K_3}=-\frac{1}{3}(Q_A^s -\alpha_A) + \frac{2e_A}{9}(K_3 + K_4) - \frac{e_A}{3}Y_1^s$
    \item $\dfrac{\partial \Pi_3^{Rs}(K_k)+ \Pi_1^{Rs}(K_k)}{\partial K_3}= \frac{1}{9}[D_A^s + 8 \alpha_A - 9(\alpha_B + \eta) + e_A(7[f_1+f_2]+ 3 f_3)+ 2e_A K_4 - 4 e_A K_3]$
\end{itemize}
We can deduce symmetric results on market B when constraints on generators 1 and 2 are active:
\begin{itemize}
    \item $\forall s\in S$: $q_B^s =\frac{1}{3}(D_B^s + 2 \alpha_B  - e_B[f_3 + f_4 + K_1 + K_2]) = Q_B^s - \frac{e_B}{3}(K_1 + K_2)$
    \item $\forall s\in S$: $z_3^s =\frac{1}{3}(\frac{D_B^s - \alpha_B}{e_A} - f_3 -f_4 - K_1 - K_2):= Z_3^s - \frac{1}{3}(K_1 + K_2)$
    \item $\forall s\in S$: $z_1^s = K_1 - g_1$
    \item $\forall s\in S$: $z_2^s = K_2 - g_2$
    \item $\Pi_1^{Rs}= [Q_B^s - (\alpha_A+ \eta) -\frac{e_B}{3}(K_1 + K_2)][K_1 - g_1]- (\alpha_A + \eta) g_1$
    \item $\Pi_3^{Rs}= [Q_B^s - \alpha_B - \frac{e_B}{3}(K_1 + K_2)][Z_1^s - \frac{1}{3}(K_1 + K_2)]- \alpha_B g_3$
    \item $\dfrac{\partial \Pi_1^0(K_k)}{\partial K_1}=Q_B^s -(\alpha_A + \eta) + \frac{e_B}{3}(g_3 - K_2) - \frac{2e_B}{3}K_1$
    \item $\dfrac{\partial \Pi_3^{Rs}(K_k)}{\partial K_1}=-\frac{1}{3}(Q_B^s -\alpha_B) + \frac{2e_B}{9}(K_1 + K_2) - \frac{e_B}{3}Z_3^s$
    \item $\dfrac{\partial \Pi_3^{Rs}(K_k)+ \Pi_1^{Rs}(K_k)}{\partial K_1}= \frac{1}{9}[D_B^s + 8 \alpha_B - 9(\alpha_A + \eta) + e_B(7[f_3+f_4]+ 3 f_1)+ 2e_B K_2 - 4 e_B K_1]$
\end{itemize}
\subsection{Second Model : resolution of Case B-6}
\label{app: case B-6}
In this case, we assume that $K_3=K_4=K_B$ and $K_1= K_2 = K_A$, we will compute the values of the partial derivatives of $\Pi_i^{Rs}$ to get the conditions when there will be trades between 3 (the buyer) and 1 (the seller):
\begin{equation*}
\begin{aligned}
    \frac{\partial \Pi_3^{Rs}(K_k) + \Pi_1^{Rs}(K_k)}{\partial K_3} - \frac{\partial \Pi_3^{Rs}(K_k) + \Pi_1^{Rs}(K_k)}{\partial K_1} &\ge 0 \\
    \Leftrightarrow K_B &\le \frac{e_B}{e_A}K_A + \frac{1}{2e_A}[D_A^s - D_B^s + 17(\alpha_A - \alpha_B) \\
    &\quad+ e_A(7[f_1+f_2] + 3 f_3) - e_B(7[f_3+f_4] + 3 f_1)]
\end{aligned}
\end{equation*}
In this situation, generators with a very low marginal cost of production (i.e. $\alpha_A << \alpha_B$ for generators from A for example) or with an high demand in their foreign market (i.e. $D_B^s > D_A^s$ for generators from A for example) will buy more capacities and export more.
}
\end{document}